%
%
%

\documentclass[graybox]{svmult}


\usepackage{mathptmx}       
\usepackage{helvet}         
\usepackage{courier}        
\usepackage{type1cm}        
%
\usepackage{makeidx}         
\usepackage{graphicx}        
\usepackage{multicol}        
\usepackage[bottom]{footmisc}




\newcommand{\ice}{{\rm ICE}}
\newcommand{\acehat}{\mbox{$\widehat {\rm ACE}$}}
\newcommand{\muhat}{\mbox{$\widehat {\rm \mu}$}}

\newcommand{\sce}{{\rm SCE}}
\newcommand{\idle}{\emptyset}

\newcommand{\aipw}{{\rm AIPW}}
\renewcommand{\Pr}{{\rm P}}
\newcommand{\as}{\mbox{ \rm a.s.\ }}
\newcommand{\by}{{\rm by}}

\newcommand{\ld}{{\rm LD}}
\newcommand{\lp}{{\rm LP}}
\newcommand{\qd}{{\rm QD}}

\newcommand{\pv}{{\rm PV}}
\newcommand{\epv}{{\rm EPV}}
\newcommand{\ps}{{\rm PS}}
\newcommand{\var}{{\rm Var}}
\newcommand{\asyvar}{{\rm Var_{\emph{.asy}}}}

\newcommand{\logit}{{\rm logit}}

\newcommand{\dt}{{\rm DT}}


\newcommand{\HIDE}[1]{ }
\newcommand{\COMMENT}[1]{ }










\newcommand{\half}{\frac{1}{2}}

\newcommand{\secref}[1]{\mbox{\S$\,$\ref{sec:#1}}}

\newcommand{\lemref}[1]{\mbox{Lemma~\ref{lem:#1}}}
\newcommand{\thmref}[1]{\mbox{Theorem~\ref{thm:#1}}}
\newcommand{\corref}[1]{\mbox{Corollary~\ref{cor:#1}}}








\newcommand{\transp}{^{\prime}}

\newcommand{\cov}{{\rm Cov}}

\newcommand{\normal}{{\cal N}}

\newcommand{\cip}{\mbox{$\perp\!\!\!\perp$}}

\hyphenation{mar-gi-nal}
\hyphenation{CG-dist-ri-bu-tion CG-dist-ri-bu-tions}
\hyphenation{CG-reg-res-sion CG-reg-res-sions}


\newtheorem{expl}{Example}[section]
\newtheorem{definer}{Definition}[section]

\newtheorem{thm}[theorem]{Theorem}
\newtheorem{algor}{Algorithm}[section]

\newtheorem{rem*}{Remark}[section]

\newcommand{\halm}{\hspace*{\fill} $\Box$\par}





\newcommand{\at}{\mbox{$\bm \alpha_3$}}




\newcommand{\eg}{e.g.}
\newcommand{\ie}{i.e.}

\newcommand{\ace}{\mbox{ACE}}
\newcommand{\e}{\mbox{E}}

\makeindex             


\begin{document}

\title*{Sufficient Covariate, Propensity Variable and Doubly Robust Estimation} 
\author{Hui Guo, Philip Dawid and Giovanni Berzuini}
\institute{Hui Guo \at Centre for Biostatistics, Institute of
  Population Health, The University of Manchester, Jean McFarlane
  Building, Oxford Road, Manchester M13 9PL, UK,
  \email{hui.guo@manchester.ac.uk} 
  \and Philip Dawid \at Statistical Laboratory, University of Cambridge, Wilberforce Road, Cambridge CB3
  0WB, UK, \email{apd25@cam.ac.uk} 
  \and Giovanni Berzuini \at Department of Brain and Behavioural Sciences, University of Pavia, Pavia, Italy, \email{giomanuel\_b@hotmail.com}}%

\maketitle

\abstract*{Each chapter should be preceded by an abstract (10--15
  lines long) that summarizes the content. The abstract will appear
  \textit{online} at \url{www.SpringerLink.com} and be available with
  unrestricted access. This allows unregistered users to read the
  abstract as a teaser for the complete chapter. As a general rule the
  abstracts will not appear in the printed version of your book unless
  it is the style of your particular book or that of the series to
  which your book belongs.  Please use the 'starred' version of the
  new Springer \texttt{abstract} command for typesetting the text of
  the online abstracts (cf. source file of this chapter template
  \texttt{abstract}) and include them with the source files of your
  manuscript. Use the plain \texttt{abstract} command if the abstract
  is also to appear in the printed version of the book.}

\abstract{Statistical causal inference from observational studies
  often requires adjustment for a possibly multi-dimensional variable,
  where dimension reduction is crucial. The propensity score, first
  introduced by Rosenbaum and Rubin, is a popular approach to such
  reduction.  We address causal inference within Dawid's
  decision-theoretic framework, where it is essential to pay attention
  to sufficient covariates and their properties. We examine the role
  of a propensity variable in a normal linear model. We investigate
  both population-based and sample-based linear regressions, with
  adjustments for a multivariate covariate and for a propensity
  variable. In addition, we study the augmented inverse probability
  weighted estimator, involving a combination of a response model and
  a propensity model.  In a linear regression with homoscedasticity, a
  propensity variable is proved to provide the same estimated causal
  effect as multivariate adjustment.  An estimated propensity variable
  may, but need not, yield better precision than the true propensity
  variable. The augmented inverse probability weighted estimator is
  doubly robust and can improve precision if the propensity model is
  correctly specified.}

\section{Introduction}
\label{sec:intro}
Causal effects can be identified from well-designed experiments, such
as randomised controlled trials (RCT), because treatment assignment is
entirely unrelated to subjects' characteristics, both observed and
unobserved. Suppose there are two treatment arms in an RCT: treatment
group and control group. Then the average causal effect (ACE) can
simply be estimated as the outcome difference of the two groups from
the observed data. However, randomised experiments, although ideal and
to be conducted whenever possible, are not always feasible. For
instance, to investigate whether smoking causes lung cancer, we cannot
randomly force a group of subjects to take cigarettes. Moreover, it
may take years or longer for development of this disease. Instead, a
retrospective case-control study may have to be considered. The task
of drawing causal conclusion, however, becomes problematic since
similarity of subjects from the two groups will rarely hold, \eg,
lifestyles of smokers might be different from those of
non-smokers. Thus, we are unable to ``compare like with like" -- the
classic problem of confounding in observational studies, which may
require adjusting for a suitable set of variables (such as age, sex,
health status, diet).  Otherwise, the relationship between treatment
and response will be distorted, and lead to biased inferences.  In
general, linear regressions, matching or subclassification are used
for adjustment purpose. If there are multiple confounders, especially
for matching and subclassification, identifying two individuals with
very similar values of all confounders simultaneously would be
cumbersome or impossible. Thus, it would be sensible to replace all
the confounders by a scalar variable. The propensity score \cite{pr:83} is
a popular dimension reduction approach in a variety of research
fields.

\section{Framework}
\label{sec:fw}
The aim of statistical causal inference is to understand and estimate
a ``causal effect'', and to identify scientific and in principle
testable conditions under which the causal effect can be identified
from observational studies. The philosophical nature of ``causality''
is reflected in the diversity of its statistical formalisations, as
exemplified by three frameworks:
\begin{enumerate}
\item{Rubin's potential response framework \cite{dr:74, dr:77, dr:78}
    (also known as Rubin's causal model) based on counterfactual
    theory;}
\item{Pearl's causal framework \cite{jp:95, jp:00} richly developed
    from graphical models;}
\item{Dawid's decision-theoretic framework \cite{apd:00, apd:02} based
    on decision theory and probabilistic conditional independence.}
\end{enumerate}
In Dawid's framework, causal relations are modelled entirely by
conditional probability distributions.  We adopt it throughout this
chapter to address causal inference; the assumptions required are, at
least in principle, testable.

Let $X$, $T$ and $Y$ denote, respectively, a (typically multivariate)
confounder, treatment, and response (or outcome). For simplicity, $Y$
is a scalar and $X$ a multi-dimensional variable. We assume that $T$
is binary: 1 (treatment arm) and 0 (control arm). Within Dawid's
framework, a non-stochastic regime indicator variable $F_T$, taking
values $\emptyset$, $0$ and $1$, is introduced to denote the treatment
assignment mechanism operating.  This divides the world into three
distinct regimes, as follows:

\begin{enumerate}
\item $F_T = \emptyset$: the observational (idle) regime. In this
  regime, the value of the treatment is passively observed and
  treatment assignment is determined by Nature.
\item $F_T = 1$: the interventional treatment regime, \ie, treatment
  $T$ is set to $1$ by manipulation.
\item $F_T = 0$: the interventional control regime, \ie, treatment $T$
  is set to $0$ by manipulation.
\end{enumerate}
For example, in an observational study of custodial sanctions, our
interest is in the effect of custodial sanction, as compared to
probation (noncustodial sanction), on the probability of
re-offence. Then $F_T=\emptyset$ denotes the actual observational
regime under which data were collected; $F_T = 1$ is the
(hypothetical) interventional regime that always imposes imprisonment;
and $F_T = 0$ is the (hypothetical) interventional regime that always
imposes probation.  Throughout, we assume full compliance and no
dropouts, \ie, each individual actually takes whichever treatment they
are assigned to. Then we have a joint distribution $P_f$ of all
relevant variables in each regime $F_T =f$ ($f=0, 1, \emptyset$).

In the decision-theoretic framework, causal assumptions are construed
as assertions that certain marginal or conditional distributions are
common to all regimes.  Such assumptions can be formally expressed as
properties of conditional independence, where this is extended to
allow non-stochastic variables such as $F_T$ \cite{apd:79a, apd:80,
  apd:02}. For example, the ``ignorable treatment assignment"
assumption in Rubin's causal model (RCM) \cite{pr:83} can be expressed
as

\begin{equation}
  \label{eq:ita}
  Y \cip F_T | T,
\end{equation}
read as ``$Y$ is independent of $F_T$ given $T$". However, this
condition will be most likely inappropriate in observational studies
where randomisation is absent.

Causal effect is defined as the response difference by manipulating
treatment, which purely involves interventional regimes. In
particular, the population-based average causal effect (ACE) of the
treatment is defined as:
\begin{equation}
  \label{eq:dtace1}
  \ace : = \e(Y | F_T = 1) - \e(Y | F_T = 0),
\end{equation}
or alternatively,
\begin{equation}
  \label{eq:dtace2}
  \ace : = \e_1(Y) - \e_0(Y) \footnote{For convenience, the values of the regime indicator $F_T$ are presented as subscripts.}.
\end{equation}

Without further assumptions, by its definition ACE is not identifiable
from the observational regime.

\section{Identification of ACE}
\label{sec:idace}

Suppose the joint distribution of ($F_T$, $T$, $Y$ ) is known and
satisfies (\ref{eq:ita}). Is ACE identifiable from data collected in
the observational regime? Note that (\ref{eq:ita}) demonstrates that
the distribution of $Y$ given $T = t$ is the same, whether $t$ is
observed in the observational regime $F_T = \emptyset$, or in the
interventional regime $F_T = t$. As discussed, this assumption would
not be satisfied in observational studies, and thus, direct comparison
of response from the two treatment groups cannot be interpreted as the
causal effect from observational data.

\begin{definition}
  The ``face-value average causal effect" (FACE) is defined as:
\end{definition}
\begin{equation}
  \label{eq:face}
  \mbox{FACE}:=\e_\emptyset(Y |T =1) - \e_\emptyset(Y |T =0). 
\end{equation}
It would be hardly true that FACE = ACE, as we would not expect the
conditional distribution of $Y$ given $T = t$ is the same in any
regime. In fact, identification of ACE from observational studies
requires, on one hand, adjusting for confounders, on the other hand,
interplay of distributional information between different regimes. One
can make no further progress unless some properties are satisfied.

\subsection{Strongly sufficient covariate}
Rigorous conditions must be investigated so as to identify ACE.
\begin{definition}
  $X$ is a covariate if:
\end{definition}
\begin{property}
  \label{pr:cov}
$$X \cip F_T.$$
\end{property}

That is, the distribution of $X$ is the same in any regime, be it
observational or interventional. In most cases, $X$ are attributes
determined prior to the treatment, for example, blood types and genes.

\begin{definition} 
  $X$ is a sufficient covariate for the effect of treatment $T$ on
  response $Y$ if, in addition to Property~\ref{pr:cov}, we have
\end{definition}
\begin{property}
  \label{pr:sufcov}
$$Y \cip F_T |(X,T).$$
\end{property}

Property \ref{pr:sufcov} requires that the distribution of $Y$, given
$X$ and $T$, is the same in all regimes. It can also be described as
``strongly ignorable treatment assignment, given $X$" \cite{pr:83}. We
assume that readers are familiar with the concept and properties of
directed acyclic graphs (DAGs). Then Properties \ref{pr:cov} and
\ref{pr:sufcov} can be represented by means of a DAG as
Fig.\ref{fig:suffcov}. The dashed arrow from $X$ to $T$ indicates that
$T$ is partially dependent on $X$, i.e., the distribution of $T$
depends on $X$ in the observational regime, but not in the
interventional regime where $F_T = t$.

\begin{figure}[!htbp]
  \sidecaption \leavevmode
  \includegraphics[bb = 70 150 310 730, scale=0.3,
  angle=270]{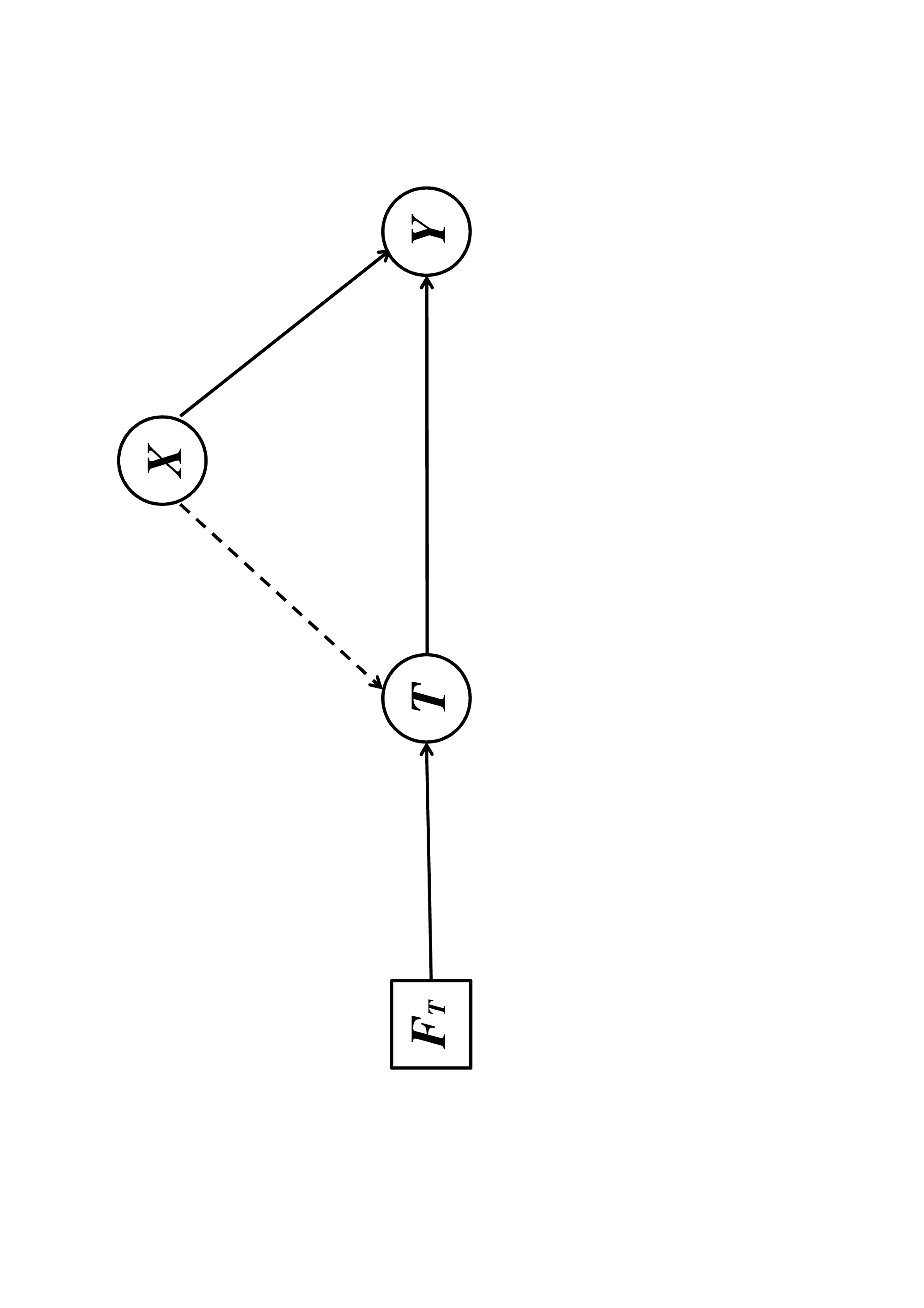}
  \caption{Sufficient covariate}
  \label{fig:suffcov}
\end{figure}

\begin{definition}
  \label{def:strong}
  $X$ is a {\em strongly sufficient covariate\/} if, in addition to
  Properties \ref{pr:cov} and \ref{pr:sufcov}, we have
\end{definition}
\begin{property}
  \label{pr:pos2}
  $\Pr_\emptyset(T =t \mid X) > 0$ with probabilility 1, for $t=0, 1$.
\end{property}

Property \ref{pr:pos2} requires that, for any $X=x$, both treatment
and control groups are observed in the observational regime.

\begin{lemma}
  \label{lem:pos1}
  Suppose $X$ is a strongly sufficient covariate.  Then, considered as
  a joint distributions for $(Y,X,T)$, $\Pr_t$ is absolutely
  continuous with respect to $\Pr_\emptyset$ (denoted by $\Pr_t \ll
  \Pr_\emptyset$), for $t=0$ and $t=1$. That is, for every event $A$
  determined by $(X,T,Y)$,
  \begin{equation}
    \Pr_\emptyset(A) = 0  \quad  \Longrightarrow  \quad \Pr_t(A) = 0.
  \end{equation}
  Equivalently, if an event $A$ occurs with probability 1 under the
  measure $\Pr_\emptyset$, then it occurs with probability 1 under the
  measure $\Pr_t$ ($t=0, 1$).
\end{lemma}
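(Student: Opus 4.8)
The plan is to exhibit an explicit Radon--Nikodym derivative of $\Pr_t$ with respect to $\Pr_\emptyset$ on the $\sigma$-algebra generated by $(X,T,Y)$, from which absolute continuity is immediate. The natural candidate is the inverse-probability-of-treatment weight
\[
  w_t \;:=\; \frac{\mathbf{1}(T=t)}{\Pr_\emptyset(T=t \mid X)} .
\]
By Property~\ref{pr:pos2} the denominator is strictly positive with $\Pr_\emptyset$-probability $1$, so $w_t$ is a well-defined, finite, non-negative function of $(X,T)$ (defined up to a $\Pr_\emptyset$-null set, which is harmless). I would then aim to prove that for every event $A$ determined by $(X,T,Y)$,
\[
  \Pr_t(A) \;=\; \e_\emptyset\!\left( w_t \, \mathbf{1}_A \right),
\]
i.e.\ that $w_t = d\Pr_t/d\Pr_\emptyset$ on $\sigma(X,T,Y)$; the lemma then follows at once, since $\Pr_\emptyset(A)=0$ forces $\e_\emptyset(w_t\mathbf 1_A)=0$, and the ``probability~$1$'' restatement is obtained by passing to $A^{c}$.

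To establish the displayed identity I would disintegrate both sides successively over $X$, then $T$, then $Y$. On the right, $\e_\emptyset(w_t \mathbf 1_A)$ factorises, using the chain rule for $\Pr_\emptyset$, as an integral over $x$ of $\Pr_\emptyset(dx)$ times an inner integral over $t'$ of $w_t(x,t')\,\Pr_\emptyset(T=t'\mid x)$ times $\Pr_\emptyset(A \mid X=x, T=t')$; the factor $\mathbf 1(T=t)$ in $w_t$ collapses the sum over $t'$ to the single term $t'=t$, where $w_t(x,t)\,\Pr_\emptyset(T=t\mid x)=1$, leaving $\int \Pr_\emptyset(A\mid X=x,T=t)\,\Pr_\emptyset(dx)$. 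On the left, in the interventional regime $F_T=t$ we have $T=t$ with probability $1$, so $\Pr_t(A)=\int \Pr_t(A\mid X=x,T=t)\,\Pr_t(dx)$. Property~\ref{pr:cov} identifies the marginal $\Pr_t(dx)$ with $\Pr_\emptyset(dx)$, and Property~\ref{pr:sufcov} identifies the conditional $\Pr_t(\,\cdot\mid X=x,T=t)$ with $\Pr_\emptyset(\,\cdot\mid X=x,T=t)$; hence the two expressions coincide, as required.

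The only delicate points are measure-theoretic bookkeeping rather than conceptual: one must ensure that the regular conditional distributions used in the disintegration exist (which holds under the mild standard-space assumptions implicit throughout), keep track of the $\Pr_\emptyset$-null set on which $w_t$ is undefined, and note that ``determined by $(X,T,Y)$'' is exactly what lets us work on $\sigma(X,T,Y)$ and ignore the non-stochastic indicator $F_T$. An alternative, slightly more elementary route avoids naming the density: decompose $A=(A_0\cap\{T=0\})\cup(A_1\cap\{T=1\})$ with each $A_t$ an event in $(X,Y)$, observe that $\Pr_\emptyset(A)=0$ forces $\Pr_\emptyset(A_t\cap\{T=t\})=0$ and hence, after dividing out the almost-surely positive factor $\Pr_\emptyset(T=t\mid X)$ supplied by Property~\ref{pr:pos2}, that the $Y$-section of $A_t$ is $\Pr_\emptyset(\,\cdot\mid X,T=t)$-null almost surely; then rebuild $\Pr_t(A)$ from the same conditional law via Properties~\ref{pr:cov} and~\ref{pr:sufcov} to obtain $0$.
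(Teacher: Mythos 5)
Your argument is correct, but your main route differs from the paper's. The paper argues directly with the common conditional probability supplied by Property~\ref{pr:sufcov}: it takes the function $w(X,T)$ that is a version of $\Pr_f(A\mid X,T)$ in every regime, notes that $\Pr_\emptyset(A)=0$ forces $w(X,1)\Pr_\emptyset(T=1\mid X)+w(X,0)\Pr_\emptyset(T=0\mid X)=0$ a.s.\ $[\Pr_\emptyset]$, uses Property~\ref{pr:pos2} to conclude $w(X,t)=0$ a.s.\ $[\Pr_\emptyset]$, transfers this to $[\Pr_t]$ via Property~\ref{pr:cov} (it is a function of $X$ alone), and rebuilds $\Pr_t(A)=\e_t\{w(X,T)\}=0$. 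You instead prove the stronger statement that $w_t=\mathbf{1}(T=t)/\Pr_\emptyset(T=t\mid X)$ is a Radon--Nikodym derivative, i.e.\ $\Pr_t(A)=\e_\emptyset(w_t\mathbf{1}_A)$ for all $A$ determined by $(X,T,Y)$, and read off absolute continuity; this is a legitimate and more informative route, which in effect anticipates the inverse-probability-weighting identity the paper only establishes later (Lemma~\ref{lem:aceht}), at the cost of heavier bookkeeping with versions of conditional distributions --- in particular, identifying $\Pr_t(\cdot\mid X,T=t)$ with $\Pr_\emptyset(\cdot\mid X,T=t)$ for a.e.\ $x$ under the common $X$-marginal itself needs Property~\ref{pr:pos2}, which is essentially the content of the paper's Theorem~\ref{thm:pos2new}; you flag this bookkeeping but should be aware it is where positivity does real work, not a mere formality. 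Your ``alternative, more elementary route'' at the end (decompose $A$ by its $T$-sections, divide out the a.s.\ positive $\Pr_\emptyset(T=t\mid X)$, rebuild via Properties~\ref{pr:cov} and~\ref{pr:sufcov}) is essentially the paper's own proof. In short: the paper's argument is the minimal one needed for the lemma, while yours buys an explicit density at the price of more careful null-set management.
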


\begin{proof}
  Property \ref{pr:sufcov}, expressed equivalently as ${(Y,X,T)} \cip
  {F_T} | {(X,T)}$, asserts that there exists a function $w(X,T)$ such
  that
 $$\Pr_f(A \mid X, T) = w(X,T)$$
 almost surely (a.s.) in each regime $f = 0, 1, \emptyset$. Let
 $\Pr_\emptyset(A) = 0$.  Then a.s. $[\Pr_\emptyset]$,
$$0 = \Pr_\emptyset(A \mid X) = w(X,1)\Pr_\emptyset(T=1 \mid X) + w(X,0)\Pr_\emptyset(T=0 \mid X).$$                                          

By Property \ref{pr:pos2}, for $t=0,1$,
\begin{equation}
  \label{eq:ac1}
  w(X,t) = 0
\end{equation}
a.s. $[\Pr_\emptyset]$. As $w(X,t)$ is a function of $X$, it follows
that (\ref{eq:ac1}) holds a.s. $[\Pr_t]$ by Property
\ref{pr:cov}. Consequently,
\begin{equation}
  \label{eq:wxtpt} 
  w(X,T) = 0 \quad \as \;  [\Pr_t],
\end{equation} 
since a.s. $[\Pr_t]$, $T=t$ and $w(X,T) = w(X,t)$ for any bounded
function $w$. Then by (\ref{eq:wxtpt}),
   $$\Pr_t(A) = \e_t\{\Pr_t(A \mid X, T)\} = \e_t\{w(X,T)\} = 0.$$

 \end{proof}

 \begin{lemma}
   \label{lem:intezx}
   For any integrable $Z \preceq$ \footnote{The $\preceq$ symbol is
     interpreted as ``a function of". } $(Y, X, T)$, and any versions
   of the conditional expectations,
   \begin{equation}
     \label{eq:intezx}
     \e_t(Z \mid X) = \e_t(Z \mid X, T)  \quad \mbox{a.s.} \; [\Pr_t].
   \end{equation}
 \end{lemma}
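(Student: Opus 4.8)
The plan is to exploit the defining feature of the interventional regime: under $\Pr_t$ the treatment is set by manipulation to the value $t$ (and, with full compliance, actually taken), so that $T = t$ a.s.\ $[\Pr_t]$. Hence the random variable $T$ is $\Pr_t$-degenerate, and conditioning on the pair $(X,T)$ ought to carry no more information than conditioning on $X$ alone. The whole statement is then an instance of the elementary fact that appending an a.s.-constant variable to the conditioning set does not change a conditional expectation.

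First I would record the degeneracy precisely: $\Pr_t(T=t)=1$. Since $T$ is binary, every generator $\{T\in D\}$ of $\sigma(X,T)$ beyond those in $\sigma(X)$ is, modulo a $\Pr_t$-null set, either empty or the whole space; consequently, for each $B\in\sigma(X,T)$ there is $B'\in\sigma(X)$ with $\Pr_t(B\,\triangle\,B')=0$. Equivalently, $\sigma(X,T)$ is contained in the $\Pr_t$-completion of $\sigma(X)$.

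Second, I would check that an \emph{arbitrary} version of $\e_t(Z\mid X)$ already serves as a version of $\e_t(Z\mid X,T)$, which is the clean way to handle the ``any versions'' clause. Measurability is immediate, as $\sigma(X)\subseteq\sigma(X,T)$. For the averaging property, fix $B\in\sigma(X,T)$, choose $B'\in\sigma(X)$ as above, and compute
$$\e_t\{\e_t(Z\mid X)\,\mathbf{1}_B\} = \e_t\{\e_t(Z\mid X)\,\mathbf{1}_{B'}\} = \e_t\{Z\,\mathbf{1}_{B'}\} = \e_t\{Z\,\mathbf{1}_B\},$$
where the middle equality is the defining property of $\e_t(Z\mid X)$, and the outer two follow from $\Pr_t(B\,\triangle\,B')=0$ together with the integrability of $Z$ and of $\e_t(Z\mid X)$. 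By uniqueness of conditional expectation up to $\Pr_t$-null sets this yields $\e_t(Z\mid X)=\e_t(Z\mid X,T)$ a.s.\ $[\Pr_t]$; running the same argument in the other direction (every version of $\e_t(Z\mid X,T)$, being $\sigma(X,T)$-measurable hence a.s.\ equal to a $\sigma(X)$-measurable function, satisfies the defining properties of $\e_t(Z\mid X)$) covers all versions.

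There is no genuine obstacle here; the statement is essentially a measure-theoretic triviality once the regime semantics $T=t$ a.s.\ $[\Pr_t]$ is invoked. The only point deserving care is precisely that ``any versions'' wording — one must argue at the level of the defining properties of conditional expectation (or, equivalently, via the completion of $\sigma(X)$), rather than comparing two fixed versions, so that the conclusion applies to every admissible choice simultaneously.
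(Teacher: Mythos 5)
Your proof is correct, and it rests on the same essential observation as the paper's — that under the interventional measure $\Pr_t$ the treatment is degenerate, $T=t$ a.s.\ $[\Pr_t]$ — but the mechanism you build on top of it is different. The paper fixes a version $j(X,T)$ of $\e_t(Z\mid X,T)$, replaces it a.s.\ by $j(X,t)$, and then runs the tower property: $\e_t(Z\mid X)=\e_t\{j(X,T)\mid X\}=\e_t\{j(X,t)\mid X\}=j(X,t)$ a.s.\ $[\Pr_t]$, so that $j(X,t)$ is simultaneously a version of both sides, which settles the ``any versions'' clause. You instead argue at the level of $\sigma$-algebras: since $T$ is $\Pr_t$-a.s.\ constant, $\sigma(X,T)$ lies in the $\Pr_t$-completion of $\sigma(X)$, and you verify directly that an arbitrary version of $\e_t(Z\mid X)$ satisfies the defining averaging property over every $B\in\sigma(X,T)$, so by uniqueness it is a version of $\e_t(Z\mid X,T)$ (your ``other direction'' is then redundant, since uniqueness already identifies every version of the right-hand side with it a.s.). Your route avoids the tower property entirely and handles the ``any versions'' wording at the level of defining properties, at the cost of the (standard, and only gestured-at) good-sets argument that promotes the generator computation to all of $\sigma(X,T)$; the paper's route is shorter and purely computational, but implicitly relies on choosing and manipulating particular versions. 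Both are sound.
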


\begin{proof}
  Let $j(X, T)$ be an arbitrary but fixed version of $\e_t(Z \mid X,
  T)$. Then $j(X, T) = j(X, t)$ a.s. $ [\Pr_t]$, and $j(X, t)$ serves
  as a version of $\e_t(Z \mid X, T)$ under $[\Pr_t]$. So
   $$\e_t (Z \mid X) = \e_t \{j(X, T) \mid X\} = \e_t \{j(X, t) \mid X\} = j(X, t)\quad \as \; [\Pr_t].$$ Thus $j(X, t)$ is a version of $\e_t (Z \mid X)$ under $[\Pr_t]$ and (\ref{eq:intezx}) follows.
 \end{proof}
 
 Since $\e_t (Z \mid X)$ is a function of $X$, then by Property
 \ref{pr:cov}, $j(X, t)$ is a version of $\e_t (Z \mid X)$ in any
 regime. Let $g(X, T)$ be some arbitrary but fixed version of
 $\e_\emptyset(Z \mid X, T)$.
 
 \begin{theorem}
   \label{thm:pos2new}
   Suppose that $X$ is a strongly sufficient covariate.  Then for any
   integrable $Z \preceq (Y, X, T)$, and with notation as above,
   \begin{equation}
     \label{eq:jxtgxt}
     j(X, t) = g(X, t)
   \end{equation}
   almost surely in any regime.
 \end{theorem}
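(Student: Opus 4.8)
The plan is to exploit Property~\ref{pr:sufcov} in its general form $(Y,X,T) \cip F_T \mid (X,T)$, which implies $Z \cip F_T \mid (X,T)$ because $Z \preceq (Y,X,T)$. By the meaning of conditional independence involving the non-stochastic regime indicator, this asserts the existence of a single function $h(X,T)$ that simultaneously serves as a version of $\e_f(Z \mid X,T)$ in every regime $f = 0,1,\emptyset$. First I would fix such an $h$. Since $g$ is a version of $\e_\emptyset(Z\mid X,T)$ we get $g(X,T) = h(X,T)$ a.s.\ $[\Pr_\emptyset]$, and since $j$ is a version of $\e_t(Z\mid X,T)$ we get $j(X,T) = h(X,T)$ a.s.\ $[\Pr_t]$.

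Next I would transfer the first identity to the interventional measure. By \lemref{pos1}, $\Pr_t \ll \Pr_\emptyset$, so $g(X,T) = h(X,T)$ a.s.\ $[\Pr_t]$ as well; combining this with $j(X,T) = h(X,T)$ a.s.\ $[\Pr_t]$ yields $j(X,T) = g(X,T)$ a.s.\ $[\Pr_t]$. In the regime $F_T = t$ the treatment is set to $t$ by manipulation, so $\Pr_t(T=t)=1$; hence a.s.\ $[\Pr_t]$ we have $j(X,T) = j(X,t)$ and $g(X,T) = g(X,t)$, and therefore $j(X,t) = g(X,t)$ a.s.\ $[\Pr_t]$.

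Finally I would upgrade ``a.s.\ $[\Pr_t]$'' to ``a.s.\ in any regime''. Both $j(X,t)$ and $g(X,t)$ are functions of $X$ alone, so the event $\{j(X,t) \neq g(X,t)\}$ is determined by $X$, and its probability is computed from the marginal law of $X$, which by Property~\ref{pr:cov} is common to all regimes. Since this probability is $0$ under $\Pr_t$, it is $0$ under $\Pr_f$ for every $f$, establishing (\ref{eq:jxtgxt}).

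The manipulations of conditional expectations are routine; the point that needs care is the bookkeeping of null sets. Specifically, one must check that an identity holding a.s.\ under $\Pr_\emptyset$ passes to $\Pr_t$ (this is exactly where absolute continuity, \lemref{pos1}, and hence strong sufficiency, is used) and that an identity between functions of $X$ holding a.s.\ under one regime passes to all regimes (this is where Property~\ref{pr:cov} enters). Those two transfer steps, together with $\Pr_t(T=t)=1$, are what make the argument close; without strong sufficiency the absolute-continuity step can fail.
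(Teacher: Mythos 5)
Your proposal is correct and follows essentially the same route as the paper: a common version $h(X,T)$ of $\e_f(Z\mid X,T)$ supplied by Property~\ref{pr:sufcov}, transfer of $g(X,T)=h(X,T)$ from $\Pr_\emptyset$ to $\Pr_t$ via the absolute continuity of \lemref{pos1}, substitution of $T=t$ under $\Pr_t$, and Property~\ref{pr:cov} to extend an a.s.\ identity between functions of $X$ to every regime. The only difference is cosmetic bookkeeping (you merge the two comparisons with $h$ under $\Pr_t$ before invoking Property~\ref{pr:cov}, whereas the paper extends $j=h$ and $g=h$ to all regimes separately and then combines), so no further comment is needed.
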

 \begin{proof}
   By Property \ref{pr:sufcov}, there exists a function $h(X,T)$ which
   is a common version of $\e_f (Z \mid X, T)$ under $[\Pr_f]$ for
   $f=0, 1, \emptyset$. Then $h(X,T)$ serves as a version of
   $\e_\emptyset (Z \mid X, T)$ under $[\Pr_\emptyset]$, and a version
   of $\e_t (Z \mid X, T)$ under $[\Pr_t]$. As $j(X, T)$ is a version
   of $\e_t (Z \mid X, T)$,
  $$  j(X, T) = h(X, T) \quad \as \; [\Pr_t], $$
  and consequently
 $$  j(X, t) = h(X, t) \quad \as \; [\Pr_t].$$
 Since $j(X, t)$ and $h(X, t)$ are functions of $X$, by Property
 \ref{pr:cov}
 \begin{equation}
   \label{eq:jxthxtf}
   j(X, t) = h(X, t) \quad \as \; [\Pr_f]
 \end{equation}
 for $f=0,1,\emptyset$.  We also have that $g(X, T) = h(X, T) \;
 \as [\Pr_\emptyset]$, and so, by Lemma \ref{lem:pos1}, $\as
 [\Pr_t]$. Then $g(X, t) = h(X, t) \; \as [\Pr_t]$, where $g(X, t)$
 and $h(X, t)$ are both functions of $X$. By Property \ref{pr:cov},
 \begin{equation}
   \label{eq:gxthxtf}
   g(X, t) = h(X, t) \quad \as \; [\Pr_f]
 \end{equation}
 for $f=0,1,\emptyset$.  Thus (\ref{eq:jxtgxt}) holds by
 (\ref{eq:jxthxtf}) and (\ref{eq:gxthxtf}).
   
\end{proof}

\subsection{Specific causal effect}
\label{sec:backdoor}

Let $X$ be a covariate.
\begin{definition}
  The {\em specific causal effect \/} of $T$ on $Y$, relative to $X$,
  is
      $$\sce := \e_1(Y \mid X) - \e_0(Y \mid X).$$
\end{definition}
We annotate $\sce_X$ to express $\sce$ as a function of $X$ and write
$\sce(x)$ to indicate that $X$ takes specific value $x$.  Because it is
defined in the interventional regimes, $\sce$ has a direct causal
interpretation, \ie, $\sce(x)$ is the average causal effect in the
subpopulation with $X=x$.

Although we do not assume the existence of potential responses, when
this assumption is made we might proceed as follows.  Take $X$ to be
the pair ${\bf Y} = (Y(1), Y(0))$ of potential responses---which
is assumed to satisfy Property~\ref{pr:cov}.  Then $\e_t(Y \mid X) =
Y(t)$, and consequently
$$\sce_{\bf Y} = Y(1) - Y(0),$$
which is the definition of ``individual causal effect'', $\ice$, in
Rubin's causal model. Thus, although the formalisations of causality
are different, $\sce$ in Dawid's decision theoretic framework can be
reagarded as a generalisation of $\ice$ in Rubin's causal model.

We can easily prove that, for any covariate $X$, $\ace = \e(\sce_X)$,
where the expectation may be taken in any regime. Since by Property
\ref{pr:cov},
    $$\e_\emptyset\{\e_t(Y \mid X)\} = \e_t\{\e_t(Y \mid X)\} = \e_t(Y),$$ 
    for $t=0,1$. Thus by subtraction, $\ace = \e_f(\sce_X)$ for any
    regime $f = 0,1,\emptyset$ and therefore the subscript $f$ can be
    dropped.  Hence, $\ace$ is identifiable from observational data so
    long as $\sce_X$ is identifiable from observational data. If $X$
    is a strongly sufficient covariate, by Theorem \ref{thm:pos2new},
    $\e_t(Y \mid X)$ is identifiable from the observational regime. It
    follows that $\sce$ can be estimated from data purely collected in
    the observational regime.  Then $\ace$ expressed as
    \begin{equation}
      \label{eq:backdoor}
      \ace = \e_\emptyset(\sce_X)
    \end{equation}
    is identifiable, from the observational joint distribution of $(X,
    T, Y)$.  Formula (\ref{eq:backdoor}) is Pearl's ``back-door
    formula'' \cite{jp:00} because by the property of modularity,
    $\Pr(X)$ is the same with or without intervention on $T$ and thus
    can be taken as the distribution of $X$ in the observational
    regime.

    \subsection{Dimension reduction of strongly sufficient covariate}
    \label{sec:dimreduct}

\label{sec:reduction}
Suppose $X$ is a multi-dimensional strongly sufficient covariate. The
adjustment process might be simplified if we could replace $X$ by some
reduced variable $V \preceq X$, with fewer dimensions---so long as $V$
is itself a strongly sufficient covariate.  Now since $V$ is a
function of $X$, Properties~\ref{pr:cov} and \ref{pr:pos2} will
automatically hold for $V$.  We thus only need to ensure that $V$
satisfies Property~\ref{pr:sufcov}: that is,
\begin{equation}
  \label{eq:prop2v}
  Y \cip {F_T}  | {(V,T)}.
\end{equation}

Since two arrows initiate from $X$ in Fig.\ref{fig:suffcov}, possible
reductions may be naturally considered, on the pathways from $X$ to
$T$, and from $X$ to $Y$. Indeed, the following theorem gives two
alternative sufficient conditions for (\ref{eq:prop2v}) to
hold. However, (\ref{eq:prop2v}) can still hold without these
conditions.
\begin{theorem}
  \label{thm:either}
  Suppose $X$ is a strongly sufficient covariate and $V \preceq X$.
  Then $V$ is a strongly sufficient covariate if either of the
  following conditions is satisfied:
  \renewcommand{\theenumi}{{\rm{(\alph{enumi})}}}
  \begin{enumerate}
  \item \label{it:respsuff} {\bf Response-sufficient reduction:}

    \begin{equation}
      \label{eq:respsuff1}
      Y \cip X | {(V, F_T = t)},
    \end{equation}
    or
    \begin{equation}
      \label{eq:respsuff2}
      Y \cip X | {(V, T, F_T = \emptyset)},
    \end{equation}   
    for $t=0,1$. It is indicated in (\ref{eq:respsuff1}) that, in each
    interventional regime, $X$ contributes nothing towards predicting
    $Y$ once we know $V$. In other words, as long as $V$ is observed,
    $X$ need not be observed to make inference on $Y$. While
    (\ref{eq:respsuff2}) implies that in the observational regime,
    knowing $X$ is of no value of predicting $Y$ if $V$ and $T$ are
    known.
  \item \label{it:treatsuff} {\bf Treatment-sufficient reduction:}
    \begin{equation}
      \label{eq:treatsuff}
      T \cip X | {(V, F_T=\emptyset)}.
    \end{equation}
    That is, in the observational regime, treatment does not depend on
    $X$ conditioning on the information of $V$.
  \end{enumerate}
\end{theorem}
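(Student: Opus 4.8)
The plan is to verify that $V$ inherits the three defining properties of a strongly sufficient covariate. Properties~\ref{pr:cov} and \ref{pr:pos2} are essentially free: $V \preceq X$ means $V$ is a deterministic function of $X$, so $V \cip F_T$ follows at once from $X \cip F_T$, and $\Pr_\emptyset(T=t\mid V) = \e_\emptyset\{\Pr_\emptyset(T=t\mid X)\mid V\}$ is the conditional expectation of an a.s.-positive quantity, hence itself a.s.\ positive. So the whole content is to deduce (\ref{eq:prop2v}), namely $Y \cip F_T \mid (V,T)$, from either hypothesis; and since (\ref{eq:prop2v}) for $Y$ is equivalent to it for every bounded measurable $\phi(Y)$, it suffices to exhibit, for each such $\phi$ with $Z=\phi(Y)$, a single function of $(V,T)$ that is a version of $\e_f(Z\mid V,T)$ simultaneously for $f=0,1,\emptyset$. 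Fix $\phi$ and let $g(X,T)$ be a fixed version of $\e_\emptyset(Z\mid X,T)$; by Lemma~\ref{lem:intezx} and Theorem~\ref{thm:pos2new}, $g(X,t)$ is then a version of $\e_t(Z\mid X)$ in \emph{every} regime. Throughout I will use two transfer principles: Lemma~\ref{lem:pos1}, which moves an a.s.\ $[\Pr_\emptyset]$ statement about $(X,T,Y)$ to an a.s.\ $[\Pr_t]$ statement; and the observation that, because $X\cip F_T$ forces the marginal law of $X$ (hence of $(X,V)$) to be the same in all regimes, an identity between two functions of $X$ holds a.s.\ $[\Pr_\emptyset]$ if and only if it holds a.s.\ $[\Pr_t]$.

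\emph{Response-sufficient case.} The crux is to show that, for $t=0,1$, $g(X,t)$ coincides a.s.\ with a function $m_t(V)$ of $V$ alone. Under (\ref{eq:respsuff2}): $Z\cip X\mid (V,T,F_T=\emptyset)$ gives $\e_\emptyset(Z\mid V,T,X) = \e_\emptyset(Z\mid V,T)$ a.s.\ $[\Pr_\emptyset]$, and since $V\preceq X$ the left side equals $\e_\emptyset(Z\mid X,T)=g(X,T)$; thus $g(X,T)$ agrees a.s.\ $[\Pr_\emptyset]$ with a function of $(V,T)$, hence (Lemma~\ref{lem:pos1}, and $T=t$ a.s.\ $[\Pr_t]$) $g(X,t)$ agrees a.s.\ $[\Pr_t]$ with a function $m_t(V)$ of $V$, and therefore also a.s.\ $[\Pr_\emptyset]$ since both sides are functions of $X$. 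Under (\ref{eq:respsuff1}): $\e_t(Z\mid X) = \e_t(Z\mid X,V) = \e_t(Z\mid V)$ a.s.\ $[\Pr_t]$ (the first step because $V\preceq X$, the second by (\ref{eq:respsuff1})), and as $g(X,t)$ is a version of $\e_t(Z\mid X)$ this again gives $g(X,t)=m_t(V)$ a.s.\ $[\Pr_t]$, hence a.s.\ $[\Pr_\emptyset]$. Given $g(X,t)=m_t(V)$, in the interventional regime $\e_t(Z\mid V,T)=\e_t(Z\mid V)=\e_t\{g(X,t)\mid V\}=m_t(V)$, and in the observational regime $\e_\emptyset(Z\mid V,T=t)=\e_\emptyset\{g(X,t)\mid V,T=t\}=\e_\emptyset\{m_t(V)\mid V,T=t\}=m_t(V)$; so the map $(V,T)\mapsto m_T(V)$ is the required common version.

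\emph{Treatment-sufficient case.} Here we keep $g$ and push the reduction through the treatment mechanism. In the observational regime, $\e_\emptyset(Z\mid V,T=t)=\e_\emptyset\{g(X,T)\mid V,T=t\}=\e_\emptyset\{g(X,t)\mid V,T=t\}=\e_\emptyset\{g(X,t)\mid V\}$, the last step by $T\cip X\mid (V,F_T=\emptyset)$. In the interventional regime, $\e_t(Z\mid V,T)=\e_t(Z\mid V)=\e_t\{g(X,t)\mid V\}$, using that $g(X,t)$ is a version of $\e_t(Z\mid X)$. Finally $\e_t\{g(X,t)\mid V\}=\e_\emptyset\{g(X,t)\mid V\}$, since this is a conditional expectation of a function of $X$ given $V$ and the joint law of $(X,V)$ is regime-invariant by Property~\ref{pr:cov}; call the common value $n_t(V)$. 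Then $(V,T)\mapsto n_T(V)$ is a version of $\e_f(Z\mid V,T)$ for all three $f$. In both cases, letting $\phi$ range over all bounded measurable functions yields (\ref{eq:prop2v}).

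The main obstacle I expect is not any one computation but the disciplined bookkeeping of versions and of the regime under which each almost-sure statement holds: knowing precisely when an identity proved a.s.\ $[\Pr_t]$ may be exported to a.s.\ $[\Pr_\emptyset]$ (only for functions of $X$, via Property~\ref{pr:cov}) rather than invoking Lemma~\ref{lem:pos1} (for general events in $(X,T,Y)$), and handling the two alternative forms (\ref{eq:respsuff1}) and (\ref{eq:respsuff2}) of response-sufficiency on the same footing.
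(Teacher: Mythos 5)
Your proof is correct and takes essentially the approach the paper relies on: the paper itself defers the proofs of both reductions to \cite{hg:10}, and your argument is the same common-version/conditional-expectation bookkeeping—built on Lemma~\ref{lem:pos1}, Lemma~\ref{lem:intezx} and Theorem~\ref{thm:pos2new}—that underlies that treatment. The only point the chapter states explicitly, namely that Properties~\ref{pr:cov} and \ref{pr:pos2} transfer automatically to $V \preceq X$, you also verify correctly, so nothing is missing.
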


Proofs of the above reductions were provided in \cite{hg:10}. An
alternative proof of \ref{it:treatsuff} can be implemented graphically
\cite{hg:10}, which results in a DAG as Fig. \ref{fig:treatsuff}
\footnote{The hollow arrow head, pointing from $X$ to $V$, is used to
  emphasise that $V$ is a function of $X$.} off which
(\ref{eq:treatsuff}) and (\ref{eq:prop2v}) can be directly read.

\begin{figure}[!htbp]
  \sidecaption \leavevmode
  \includegraphics[bb = 70 150 310 730, scale=0.35,
  angle=270]{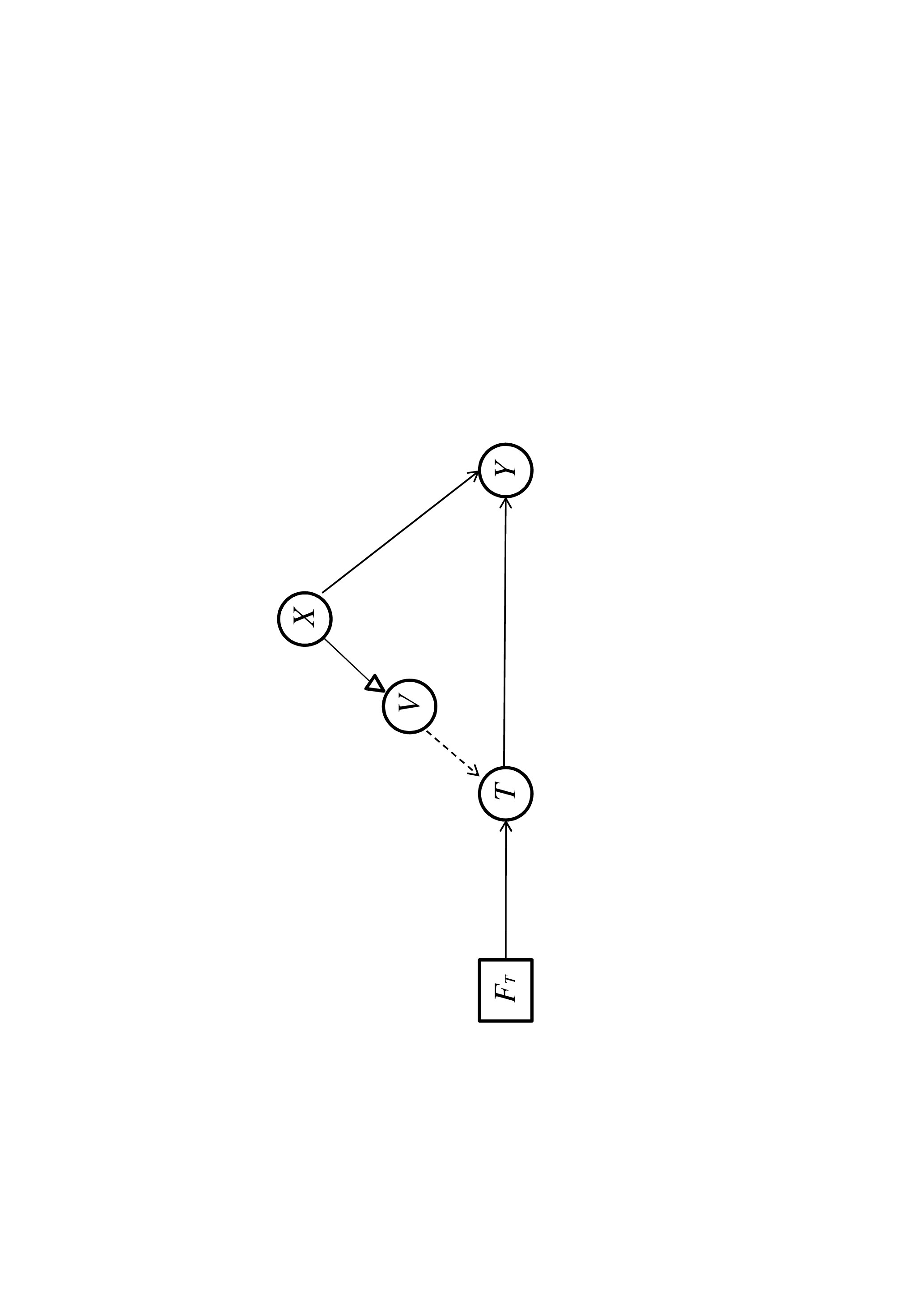}
  \caption{Treatment sufficient reduction}
  \label{fig:treatsuff}
\end{figure}

A graphical approach to \ref{it:respsuff} does not work since
Property~\ref{pr:pos2} is required.  However, while not serving as a
proof, Fig. \ref{fig:respsuff} conveniently embodies the conditional
independencies Properties \ref{pr:cov}, \ref{pr:sufcov} and the
trivial property $V \cip T | {(X, F_T)}$, as well as
(\ref{eq:prop2v}).

\begin{figure}[!htbp]
  \sidecaption \leavevmode
  \includegraphics[bb = 70 150 310 730, scale=0.35,
  angle=270]{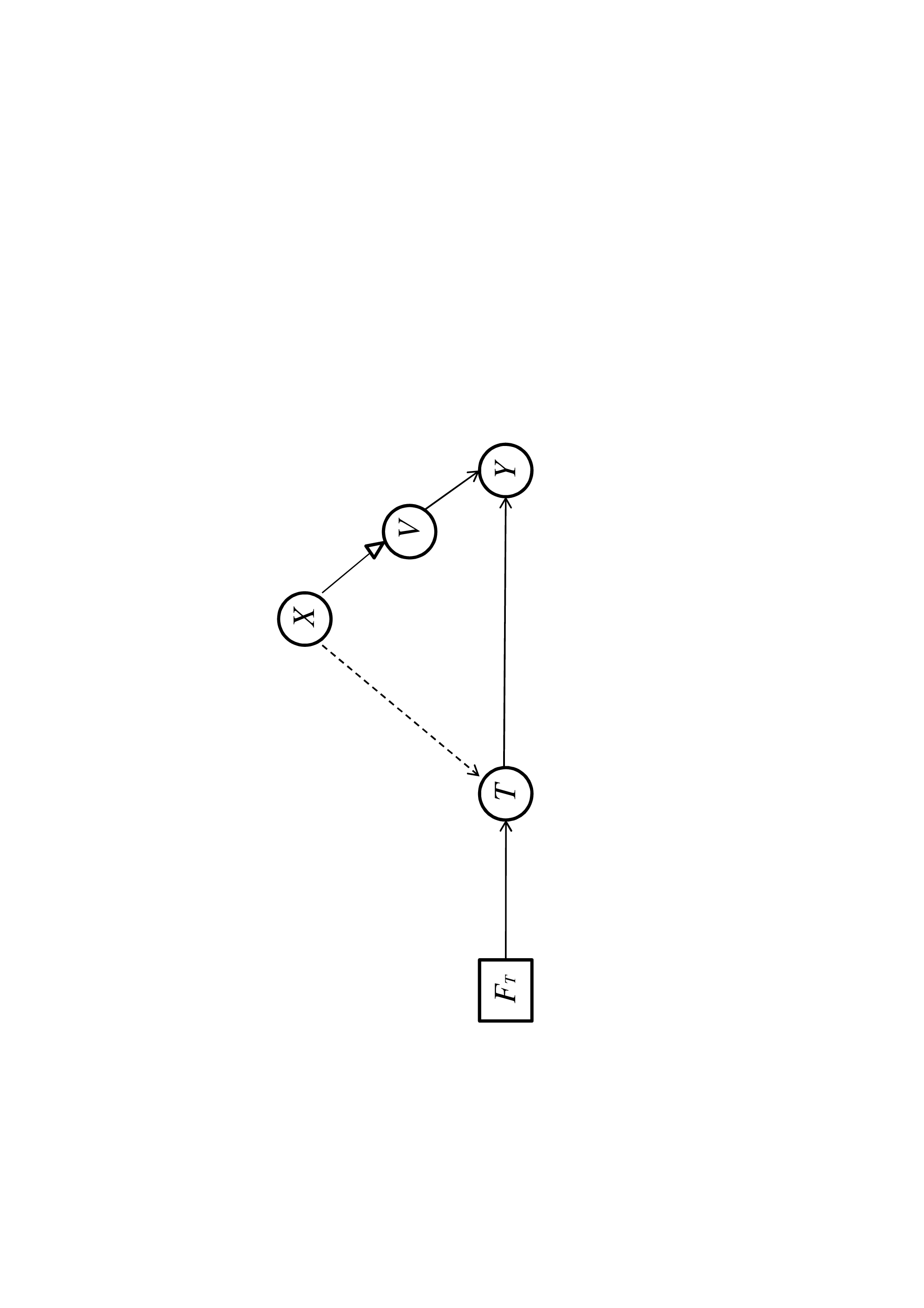}
  \caption{Response sufficient reduction}
  \label{fig:respsuff}
\end{figure}

\section{Propensity analysis}
\label{sec:ps}

Here we further discuss the treatment-sufficient reduction, which does
not involve the response. This brings in the concept of
\emph{propensity variable}: a minimal treatment-sufficient covariate,
for which we investigate the unbiasedness and precision of the
estimator of $\ace$. Also the asymptotic precision of the estimated
$\ace$, as well as the variation of the estimate from the actual data,
will be analysed. In a simple normal linear model that applied for covariate adjustment, two cases are
considered: homoscedasticity and heteroscedasticity. A non-parametric
approach -- subclassification will also be conducted, for different
covariance matrices of $X$ of the two treatment arms. The estimated
$\ace$ obtained by adjusting for multivariate $X$ and by adjusting for a scalar
propensity variable, will then be compared theoretically and through
simulations \cite{hg:10}.

\subsection{Propensity score and propensity variable}
\label{subsec:propvar}

The propensity score (PS), first introduced by Rosenbaum and Rubin, is
a balancing score \cite{pr:83}. Regarded as a useful tool to reduce
bias and increase precision, it is a very popular approach to causal
effect estimation. PS matching (or subclassification) method,
widely used in various research fields, 
exploits the property of
\emph{conditional (within-stratum) exchangeability}, whereby individuals with the same value of PS (or belonging to a group with similar values of
PS) are taken as comparable or exchangeable.
We will, however, mainly focus on the application of PS within a linear
regression. The definitions of the balancing score and PS given
below are borrowed from \cite{pr:83}.
\begin{definition}
  \label{def:bs}
  A \emph{balancing score} $b(X)$ is a function of $X$ such that, in
  the observational regime \footnote{Rosenbaum and Rubin do not define
    the balancing score and the PS explicitly for observational
    studies, although they do aim to apply the PS approach in such
    studies.}, the conditional distribution of $X$ given $b(X)$ is the
  same for both treatment groups. That is,
  $$X \cip T | {(b(X),F_T=\emptyset)}.$$
\end{definition}
It has been shown that adjusting for a balancing score rather than $X$
results in unbiased estimate of \ace, with the assumption of strongly
ignorable treatment assignment \cite{pr:83}. One can trivially choose
$b(X) = X$, but it is more constructive to find a balancing score to
be a many to one function.

\begin{definition}
  \label{def:ps}
  The \emph{propensity score}, denoted by $\Pi$, is the probability of
  being assigned to the treatment group given $X$ in the
  observational regime:
$$\Pi:= \Pr_\emptyset(T = 1\;|\;X).$$
\end{definition}

We shall use the symbol $\pi$ to denote a particular realisation of $\Pi$.
By (\ref{eq:treatsuff}) and Definitions \ref{def:bs} and \ref{def:ps},
we assert that PS is the coarsest balancing score.  For a subject
$i$, PS is assumed to be positive, \ie, $0 < \pi_{i} < 1$. Those
with the same value of PS are equally likely to be allocated to the
treatment group (or equivalently, to the control group), which
provides observational studies with the randomised-experiment-like
property based on measured $X$. This is because the characteristics of
the two groups with the same or similar PS are ``balanced". Therefore,
the scalar PS serves as a proxy of multi-dimensional variable $X$, and
thus, it is sufficient to adjust for the former instead of the latter.
In observational studies, PS is generally unknown because we do
not know exactly which components of X have impact on $T$ and how the
treatment is associated with them. However, we can estimate PS
from the observational data.

PS analysis for causal inference is based on a sequence of two stages:

\textbf{Stage 1:} PS Estimation. It is estimated by the observed
$T$ and $X$, and normally by a logistic regression of $T$ on $X$ for
binary treatment. Note that the response $Y$ is
irrelevant at this stage. Because we can estimate PS without
observing $Y$, there is no harm in finding an "optimal" regression 
model of $T$ on $X$ by repeated trials.

\textbf{Stage 2:} Adjusting for PS. Various adjustment approaches
have been developed, \eg, linear regression. If we are unclear about the
conditional distribution of $Y$ given $T$ and PS, non-parametric
adjustment such as matching or subclassification could be applied
instead.

Although two alternatives for dimension reductions have been provided, in practice, this type of
reduction may be more convenient in many cases. For example, 
certain values of the response may occur rarely and only after long observation periods
after treatment. In addition, it may sometimes be tricky to determine a "correct" form for a regression model of $Y$ on $X, T$ and $F_T$. Swapping the positions of $X$ and $T$, Equation (\ref{eq:treatsuff})
can be re-expressed as
\begin{equation}
  \label{eq:balance}
  X \cip T | {(V,F_T=\emptyset)},
\end{equation}
which states that the observational distribution of $X$ given $V$ is
the same for both treatment arms. That is to say, $V$ is a {\em
  balancing score\/} for $X$.

The treatment-sufficient condition~\ref{it:treatsuff} can be equivalently interpreted as
follows.  Consider the family ${\cal Q} = \{Q_0, Q_1\}$ consisting of
observational distributions of $X$ for the two groups $T=0$ and $T=1$.
Then Equation (\ref{eq:treatsuff}), re-expressed as (\ref{eq:balance}), says
that $V$ is a {\em sufficient statistic\/} (in the usual Fisherian
sense \cite{raf:25}) for this family.  In particular, a \emph{minimal}
treatment-sufficient reduction is obtained as a minimal sufficient
statistic for ${\cal Q}$: \ie, any variable almost surely equal to a
one-one function of the likelihood ratio statistic $\Lambda :=
q_1(X)/q_0(X)$, where $q_i(\cdot)$ is a version of the density of
$Q_i$.
  
\begin{definition}
  A \emph{propensity variable} is a minimal treatment-sufficient
  covariate, or a one-one function of the likelihood ratio statistic
  $\Lambda$.
\end{definition}

The concept of a propensity variable is derived from PS which is
related to $\Lambda$ in the following way:
\begin{equation}
  \label{eq:bayes}
  \Pi =\Pr_\emptyset(T=1 \mid X) = \theta\,\Lambda / ({1-\theta + \theta\Lambda}),
\end{equation}
where $0 < \theta := \Pr_\emptyset(T=1) < 1$ by Property
\ref{pr:pos2}.

It is entirely possible, from the above discussion, that a different
propensity variable will be obtained if we start from a different
strongly sufficient covariate.

\subsection{Normal linear model (homoscedasticity)}
The above theory will be illustrated by a simple example under linear-normal homoscedastic parametric assumptions.
\label{sec:nlm}
\subsubsection{Model construction}
\label{sec:modconstruct}
Suppose we have a scalar response variable $Y$, and a $(p \times 1)$
 strongly sufficient covariate $X$ that satisfies Properties
~\ref{pr:cov}, \ref{pr:sufcov} and \ref{pr:pos2}.  Let the conditional
distribution of $Y$ given $(X,T,F_T)$ be specified as:
\begin{equation}
  \label{eq:model2} 
  Y \mid (X, T, F_T)\sim \normal(d + \delta T + b'X, \phi),
\end{equation}
where the symbol $\sim$ stands for``is distributed as" and the symbol $\cal N$ stands for normal
distribution, with parameters $d$ and $\delta$ (scalar), $b$ ($p \times 1$), and
$\phi$ (scalar). Note that here and in the following models,
we assume no interactions between variables in $X$ although interactions can be formally dealt with via dummy variables.  Suppose $X$ is a
strongly sufficient covariate, then the coefficient $\delta$ of $T$ in
(\ref{eq:model2}) is the average causal effect $\ace$, which can be
easily proved as follows.
\begin{eqnarray*}
  \ace &=& \e(\sce_X) = \e\{\e_1(Y \mid X)\} - \e\{\e_0(Y \mid X)\}\\
  &=& \e(d + \delta + b'X) - \e(d + b'X) = \delta \qquad \qquad \by \quad (\ref{eq:model2}).
\end{eqnarray*}
It is readily seen that the specific causal effect $\sce_X$ is a
constant and equals $\delta$.

From (\ref{eq:model2}), the {\em linear predictor\/} $\lp := b\transp
X$ satisfies the conditional independence properties in
Condition~\ref{it:respsuff} of Theorem \ref{thm:either}. Thus, $\lp$
is a response-sufficient reduction of $X$, and $\e(Y \mid \lp, T) = d
+ \delta T + \lp$, with coefficient $\delta$ of $T$ that does not depend on the regime by virtue of the sufficiency condition.

Now assume that our model for the observational distribution of $(T,X)$ is as follows:
\begin{equation}
  \label{eq:modelt}
  \Pr_\emptyset(T = 1) = \theta
\end{equation}
\begin{equation}
  \label{eq:model1} 
  X  \mid (T, F_T=\emptyset) \sim \normal(\mu_T, \Sigma)
\end{equation} 
with parameters $\theta \in (0,1)$, $\mu_0$($p \times 1$), $\mu_1$ ($p \times
1$), and covariance matrix $\Sigma$ ($p \times p$, positive definite, identical in the two treatment groups). The corresponding marginal distribution of $X$ is a multivariate normal
mixture
\begin{equation}
  \label{eq:modelx} 
  X  \mid  F_T=\emptyset \sim (1-\theta)\, \normal(\mu_0, \Sigma) +
  \theta\, \normal(\mu_1, \Sigma),
\end{equation}
in the observational regime, and because we have assumed Property
\ref{pr:cov} to hold, also in the interventional regime.  
The observational distribution of $T$ given $X$ is given by
(\ref{eq:bayes}), with
\begin{eqnarray}
  \log \Lambda & = & \log\{\Pr_\emptyset(X \mid T=1)\} - \log\{\Pr_\emptyset(X \mid T=0)\}  \nonumber \\
  \label{eq:loglr}
  & = & - \frac{1}{2}(\mu_1\transp \Sigma^{-1}\mu_1 - \mu_0\transp \Sigma^{-1}\mu_0) + \ld ,
\end{eqnarray}
where
\begin{equation}
  \label{eq:v}
  \ld : = \gamma\transp X,
\end{equation}
with
\begin{equation}
  \label{eq:gamma}
  \gamma :=  \Sigma^{-1} (\mu_1 -\mu_0).
\end{equation}
$\ld$ is Fisher's {\em linear discriminant\/} \cite{kvm:79}, best
separating the pair of multivariate normal observational distributions
for $X \mid T=0$ and $X \mid T=1$.

Suppose $V$ is a linear sufficient covariate -- a linear function of
$X$ that is itself a sufficient covariate.  We have proved that the
coefficient of $T$ in the observational linear regression of $Y$ on
$T$ and $V$ is $\delta$ \cite{hg:10}.  From (\ref{eq:loglr}) we see
that $\ld$ is a propensity variable which is a linear strongly
sufficient covariate. We deduce that under the given distributions,
the coefficient of $T$ in the observational regression of $Y$ on $T$
and $\ld$ is $\delta$.

\begin{theorem}
  \label{thm:gendis}
  The coefficient of $T$ in the linear regression of $Y$ on $(T,\ld)$
  is the same as that in the linear regression of $Y$ on $(T,X)$.
\end{theorem}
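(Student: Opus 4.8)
The plan is to treat both regressions as population‑wise best linear predictors in the observational regime, i.e.\ as $L^2$‑projections of $Y$ onto the linear span of the regressors together with a constant, and to show that each such projection has $T$‑coefficient equal to $\delta$. (If $\mu_1=\mu_0$ then $\ld$ is constant and the claim is immediate, so I assume $\mu_1\neq\mu_0$, which also guarantees that $\{1,T,\ld\}$ is non‑degenerate so the coefficients are uniquely defined.)

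First I would dispose of the regression of $Y$ on $(T,X)$. By (\ref{eq:model2}), $\e_\idle(Y\mid T,X)=d+\delta T+b\transp X$ is already an affine function of $(T,X)$; since the residual $Y-\e_\idle(Y\mid T,X)$ is orthogonal to every $\sigma(T,X)$‑measurable function, this affine function \emph{is} the best linear predictor of $Y$ given $(T,X)$, and its $T$‑coefficient is $\delta$.

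The substance is the regression of $Y$ on $(T,\ld)$, where $\ld=\gamma\transp X$. By the tower property, $\e_\idle(Y\mid T,\ld)=d+\delta T+\e_\idle(b\transp X\mid T,\ld)$, so it suffices to prove that $\e_\idle(b\transp X\mid T,\ld)$ does not depend on $T$ and is affine in $\ld$; then $\e_\idle(Y\mid T,\ld)$ is affine in $(T,\ld)$, hence equals the best linear predictor, with $T$‑coefficient $\delta$, and the theorem follows. To get this I would decompose $b\transp X=c\,\ld+R$ with $c:=b\transp\Sigma\gamma/(\gamma\transp\Sigma\gamma)$ and $R:=(b-c\gamma)\transp X$. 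Within each arm, $X\mid(T=t,F_T=\idle)\sim\normal(\mu_t,\Sigma)$, so $\cov_\idle(R,\ld\mid T)=(b-c\gamma)\transp\Sigma\gamma=0$ by the choice of $c$, i.e.\ $R$ and $\ld$ are uncorrelated within each arm; being jointly Gaussian there, they are conditionally independent given $T$. Using $\gamma=\Sigma^{-1}(\mu_1-\mu_0)$ one checks $b\transp\Sigma\gamma=b\transp(\mu_1-\mu_0)$ and $\gamma\transp\Sigma\gamma=\gamma\transp(\mu_1-\mu_0)$, whence $(b-c\gamma)\transp(\mu_1-\mu_0)=0$; therefore $\e_\idle(R\mid T=t)=(b-c\gamma)\transp\mu_t$ is the same for $t=0,1$, a common constant $r_0$. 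Consequently $\e_\idle(R\mid\ld,T)=r_0$, so $\e_\idle(R\mid\ld)=r_0$, and $\e_\idle(b\transp X\mid T,\ld)=c\,\ld+r_0$ is affine in $\ld$ and free of $T$, as required.

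I expect the only real obstacle to be the cancellation $(b-c\gamma)\transp(\mu_1-\mu_0)=0$ — equivalently, the statement that once $\ld$ is partialled out the part of $b\transp X$ that still separates the two arms vanishes. This is exactly the first‑moment manifestation of the fact, noted just before the theorem, that $\ld$ is a minimal treatment‑sufficient covariate, so that $X\cip T\mid(\ld,F_T=\idle)$ by (\ref{eq:balance}); indeed one could shortcut the argument by invoking that conditional independence directly to annihilate the $T$‑dependence of $\e_\idle(b\transp X\mid T,\ld)$, leaving only the easy affinity in $\ld$, or alternatively by citing that $\ld$ is a linear strongly sufficient covariate and quoting the already‑stated fact that the $T$‑coefficient of $Y$ on $(T,V)$ is $\delta$ for any such $V$. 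Everything else is routine Gaussian conditioning plus the elementary observation that an affine conditional expectation coincides with the $L^2$ best linear predictor.
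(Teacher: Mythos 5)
Your argument is sound under the parametric assumptions of \secref{nlm}: both $T$-coefficients equal $\delta$; the decomposition $b\transp X=c\,\ld+R$ with $(b-c\gamma)\transp\Sigma\gamma=0$ and $(b-c\gamma)\transp(\mu_1-\mu_0)=0$ is correct; and an affine conditional expectation does coincide with the best linear predictor. But this is a genuinely different, and strictly narrower, route than the one the paper intends (its proof is in the cited AISTATS paper, and the surrounding text describes its character). The paper treats \thmref{gendis} as a purely algebraic fact about least-squares projections, ``which does not have a direct link to the regimes and causality whatsoever'': no response model (\ref{eq:model2}) and no normality are needed. The moment-algebra argument runs: $\cov(X,T)=\theta(1-\theta)(\mu_1-\mu_0)$ and $\var(X)=\Sigma+\theta(1-\theta)(\mu_1-\mu_0)(\mu_1-\mu_0)\transp$, so by Sherman--Morrison the linear regression of $T$ on $X$ has coefficient vector proportional to $\Sigma^{-1}(\mu_1-\mu_0)=\gamma$; hence the projection of $T$ onto the span of $\{1,X\}$ already lies in the span of $\{1,\ld\}$, and by partitioned regression (Frisch--Waugh) the $T$-coefficient in either regression equals $\cov(Y,\tilde T)/\var(\tilde T)$ for the \emph{same} residual $\tilde T$, so the two coincide for an arbitrary joint distribution --- only first and second moments enter.

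That extra generality is not cosmetic: the paper immediately applies the theorem to the empirical distribution of a sample to obtain \corref{sample} (echoing Rosenbaum and Rubin's remark about using the sample dispersion matrix in both places), and later asserts that the theorem ``still applies'' in the heteroscedastic setting of \secref{diffcov}. Your proof covers neither case: under an empirical distribution $\e(Y\mid T,X)$ is not affine and $X$ given $T$ is not Gaussian, and when $\Sigma_0\neq\Sigma_1$ (with $\Sigma$ the weighted average) your key step $\cov_\idle(R,\ld\mid T)=(b-c\gamma)\transp\Sigma\gamma=0$ fails within each arm, as do your suggested shortcuts, since there $\ld$ is neither a balancing score nor a sufficient covariate. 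So as a proof of the statement within the homoscedastic population model your write-up is fine, but to support the uses the paper makes of the theorem you would need the distribution-free projection argument rather than Gaussian conditioning.
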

Theorem \ref{thm:gendis} states that it is algebraically true that $X$
and Fisher's linear discriminant $\ld$ generate identical coefficient
of $T$ in linear regressions, which does not have a direct link to the
regimes and causality whatsoever. In our linear normal model, $\delta$
is interpreted as $\ace$ and can be identified from the observational
data simply because we have assumed that $X$ is a strongly sufficient
covariate.  Applying Theorem \ref{thm:gendis} to the empirical
distribution of $(Y, T, {X})$ from a sample, we deduce \corref{sample}
as follows.
\begin{corollary}
  \label{cor:sample}
  Suppose we have data on $(Y, T, X)$ for a sample of individuals.
  Let $\ld^*$ be the sample linear discriminant for $T$ based on $X$.
  Then the coefficient of $T$ in the sample linear regression of $Y$
  on $T$ and $\ld^*$ is the same as that in the sample linear
  regression of $Y$ on $T$ and $X$.
\end{corollary}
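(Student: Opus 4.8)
The plan is to read Corollary~\ref{cor:sample} as the special case of Theorem~\ref{thm:gendis} obtained by substituting the empirical distribution $\widehat{P}_{n}$ --- which puts mass $1/n$ at each of the $n$ observed data points $(y_{i},t_{i},x_{i})$ --- for the population law of $(Y,T,X)$. Sample least-squares regression of $Y$ on a list of regressors is, by definition, the $L^{2}(\widehat{P}_{n})$-projection of $Y$ onto the linear span of those regressors together with the constant; equivalently, viewing each variable as the vector of its $n$ sample values, it is orthogonal projection in $n$-space. Likewise the sample linear discriminant is $\ld^{*}=(\gamma^{*})^{\prime}X$ with $\gamma^{*}=S^{-1}(\bar{X}_{1}-\bar{X}_{0})$, where $\bar{X}_{t}$ is the within-group sample mean and $S$ the pooled within-group sample covariance; that is, $\ld^{*}$ is exactly the object ``$\ld$'' of (\ref{eq:v})--(\ref{eq:gamma}) formed from $\widehat{P}_{n}$. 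Everything is carried out conditionally on the observed sample, so within a fixed realisation $\widehat{P}_{n}$, $\gamma^{*}$ and $\ld^{*}$ are deterministic and there is nothing stochastic to attend to; the corollary will then follow by applying Theorem~\ref{thm:gendis} with $\widehat{P}_{n}$ in place of the true distribution.

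What must be checked is that the argument behind Theorem~\ref{thm:gendis} survives for $\widehat{P}_{n}$. As the discussion after that theorem stresses, the identity is purely algebraic and uses only first and second moments, and I would organise it around two ingredients. First, the $L^{2}(\widehat{P}_{n})$-projection of $T$ onto $\{1,X\}$ already lies in $\vspan\{1,\ld^{*}\}$: writing $\widehat{\Delta}=\bar{X}_{1}-\bar{X}_{0}$, $\widehat{\theta}=n_{1}/n$ and $\widehat{c}=\widehat{\theta}(1-\widehat{\theta})$, one has $\cov_{\widehat{P}_{n}}(T,X)=\widehat{c}\,\widehat{\Delta}^{\prime}$ and $\var_{\widehat{P}_{n}}(X)=S+\widehat{c}\,\widehat{\Delta}\widehat{\Delta}^{\prime}$, so a rank-one (Sherman--Morrison) inversion makes $\cov_{\widehat{P}_{n}}(T,X)\,\var_{\widehat{P}_{n}}(X)^{-1}$ a scalar multiple of $\widehat{\Delta}^{\prime}S^{-1}=(\gamma^{*})^{\prime}$, whence the fitted values of $T$ on $\{1,X\}$ form an affine function of $\ld^{*}$ alone. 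Second, by Frisch--Waugh--Lovell the coefficient of $T$ in each of the two regressions of $Y$ equals $\langle Y,\rho\rangle/\langle\rho,\rho\rangle$, where $\rho$ is the residual of $T$ after least-squares projection onto the remaining regressors ($\{1,X\}$ in one case, $\{1,\ld^{*}\}$ in the other); since $\vspan\{1,\ld^{*}\}\subseteq\vspan\{1,X\}$ and, by the first ingredient, the $\{1,X\}$-projection of $T$ already sits in $\vspan\{1,\ld^{*}\}$, the two residuals $\rho$ coincide, so the two coefficients of $T$ agree. Both ingredients are finite-sample algebra and hence valid for $\widehat{P}_{n}$.

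The single point that is not a mechanical transcription of the population statement is that model~(\ref{eq:model1}) is homoscedastic, whereas the within-group sample covariances $\widehat{\Sigma}_{0}$ and $\widehat{\Sigma}_{1}$ are generically unequal, so there is no common ``$\Sigma$'' in the sample. This is harmless: the first ingredient above never uses $\widehat{\Sigma}_{0}=\widehat{\Sigma}_{1}$, only that the matrix used to form the discriminant is the pooled within-group covariance, so that $\var_{\widehat{P}_{n}}(X)$ is a rank-one update of it along the direction $\widehat{\Delta}$; and changing the normalisation of the pooled covariance (dividing by $n$, by $n-2$, or otherwise) merely rescales $\gamma^{*}$ by a scalar, altering neither $\vspan\{1,\ld^{*}\}$ nor the conclusion. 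The remaining regularity requirements --- $0<n_{1}<n$, and $S$ (equivalently $\var_{\widehat{P}_{n}}(X)$) nonsingular, which needs $n\ge p+2$ and the data in general position --- are the sample analogues of Property~\ref{pr:pos2} and of the positive-definiteness of $\Sigma$, and are simply assumed. I expect the only real work to be this bookkeeping over which covariance matrix enters $\ld^{*}$; once that is pinned down, Corollary~\ref{cor:sample} is immediate from Theorem~\ref{thm:gendis}.
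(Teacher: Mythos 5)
Your proposal is correct and follows essentially the same route as the paper, which deduces Corollary~\ref{cor:sample} precisely by applying the purely algebraic Theorem~\ref{thm:gendis} to the empirical distribution of $(Y,T,X)$. The extra material you supply --- the Sherman--Morrison/Frisch--Waugh--Lovell verification that the algebra holds at $\widehat{P}_{n}$, and the observation that the pooled within-group covariance (up to scalar normalisation) is the right matrix so that unequal group covariances cause no trouble --- is a sound elaboration of what the paper (and Rosenbaum and Rubin's remark about using the same sample dispersion matrix in both the discriminant and the adjustment) leaves implicit.
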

Rosenbaum and Rubin \cite{pr:83} (\S~3.4) also give this result with a
brief non-causal argument: whenever the sample dispersion matrix is
used in both the form of $\ld$ and regression adjustment, the
estimated coefficient of $T$ must be the same.

As discussed \cite{hg:10}, here is a paradox: we regard
adjustment for the propensity variable as an adjustment for the treatment
assignment process, by regressing $Y$ on $T$ and the estimated
propensity variable $\ld^*$. However, from the result of
\corref{sample}, it appears that what we actually adjust for is the full set of covariates $X$,
which makes the treatment assignment process completely irrelevant.

\subsubsection{Precision in propensity analysis}
\label{subsec:precision}
One might intuitively think that the precision of the estimated $\ace$
would be improved if we were to adjust for a scalar variable --- the
sample-based propensity variable $\ld^*$, rather than $p$-dimensional
variable $X$.  However, \corref{sample} tells us that adjusting for
$\ld^*$ does not increase the precision of our estimator. In fact,
whether one adjusts for $\ld^*$ and for all the $p$ predictors makes {\em
  absolutely no difference\/} to our estimate, and thus, to its
precision.  Similar conclusions have been drawn in \cite{jh:98, wcw:04,
  ss:07}. Our intuition is that the increased precision obtained by regressing on
$V$ is offset by the overfitting error involved in selecting $V$.

Previous evidence \cite{jr:92, kh:03, dr:92} supports the claim that the
estimated propensity variable outperforms the true propensity
variable. That is, adjusting for the former yields higher precision of
the estimated $\ace$ than the latter. These two types of adjustment correspond to regressing
 $Y$ on $(T, \ld)$ and on $(T, \ld^*)$ in our model and both provide
an unbiased estimator of $\ace$. The claim obviously cannot be always
valid by simply considering a special case: $\ld = \lp$, because by
\corref{sample}, regressing on $\ld^*$ is the same as adjusting for
$\lp^*$, which by the Gauss-Markov theorem will be less precise than
regressing on the true linear predictor $\lp$ (or equivalently $\ld$).
Nevertheless, the claim is likely to hold when $\ld$ is not highly
correlated with $\lp$ because $\ld$ is a less precise response
predictor.

\subsubsection{Asymptotic variance analysis}
\label{sec:asyvar}

To gain a closer insight into the variance of the estimated $\ace$, by
adjusting for the true propensity variable $\pv$ (if known) and the
estimated propensity variable $\epv$, we consider a toy example in
which the parameters in (\ref{eq:model2}), (\ref{eq:modelt}) and
(\ref{eq:model1}) are set as follows: $$p = 2, \quad P_\idle(T=1) =
\theta \in (0, 1), \quad b = (b_1, b_2)',$$ the covariance matrix
$\Sigma$ is diagonal with identical entries $\tau$, and
\begin{equation}
  \label{eq:ex2t}
  \e(X_2 \mid T=1) = \e(X_2 \mid T=0) = \e(X_2)
\end{equation}
By the setting of $\Sigma$, we see that
\begin{equation}
  \label{eq:x1x2t}
  X_1 \cip X_2\;|\;T. 
\end{equation}

It is also clear that the true $\pv$ is just $X_1$, by minimal
treatment-sufficient reduction and related equations (\ref{eq:loglr}),
(\ref{eq:v}), (\ref{eq:gamma}). The conditions according to our model
setting are expressed by a DAG as Fig. \ref{fig:x1x2}.

\begin{figure}[!htbp]
  \begin{center}
    \leavevmode
    \includegraphics[bb = 50 230 240 560, scale=0.4,
    angle=270]{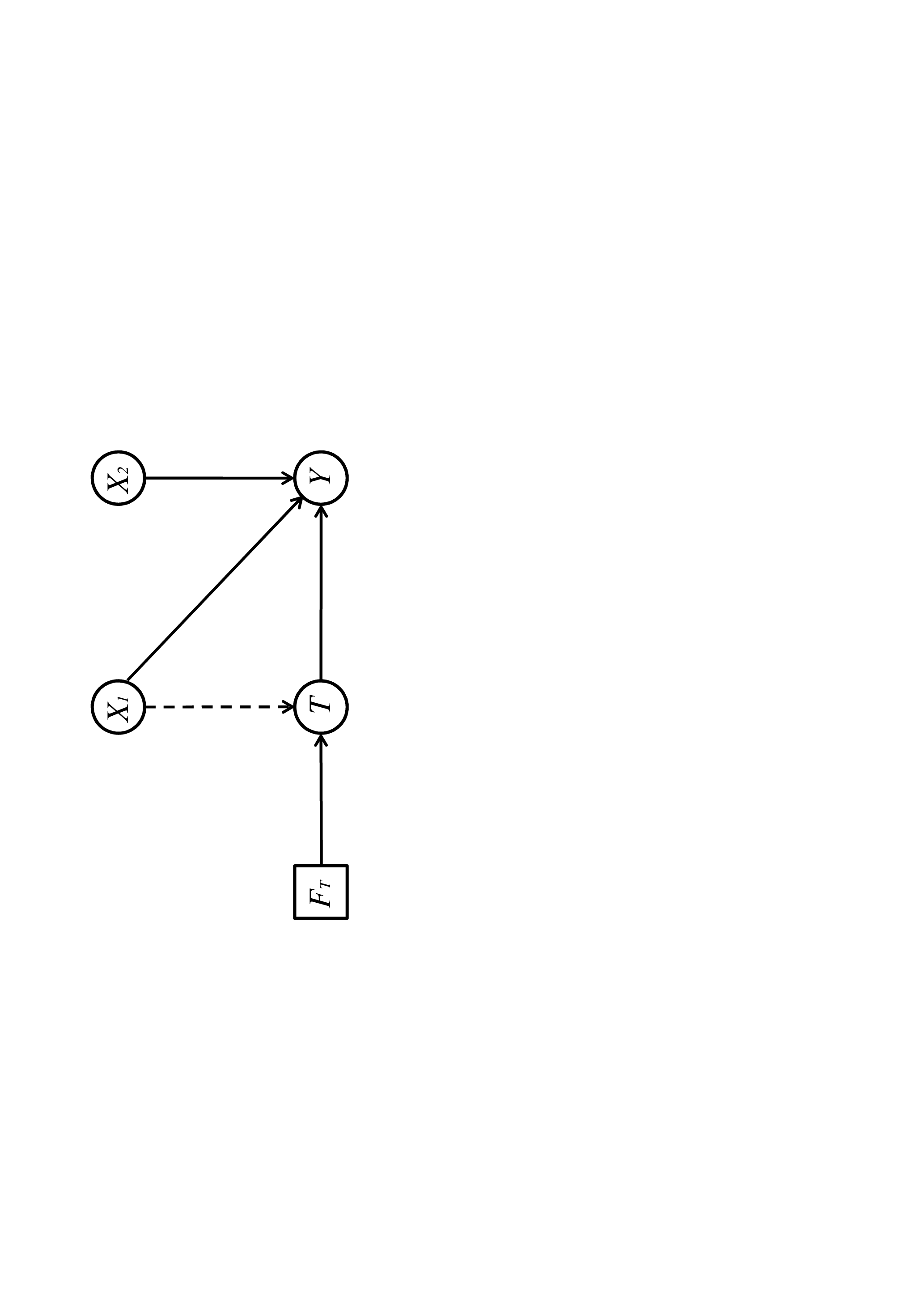}
    \caption{Propensity variable $X_1$ and response predictor $X = (X_1, X_2)$}
    \label{fig:x1x2}
  \end{center}
\end{figure}

In practice, all the parameters are unknown, and consequently the
exact form of $\pv$ is not known. What one would normally do is
adjust for the whole set of the observed $X$, which is equivalent to
adjusting for $\ld^*$ (or $\epv$) in the linear regression
approach by \corref{sample}. In particular, two linear regressions are
considered as follows:

\vspace{2mm}

\underline{$M_{0}$:} $Y$ on ($T, X$),

\vspace{2mm}

\underline{$M_{1}$:} $Y$ on ($T, X_1$).

\vspace{2mm}

Then the design matrix is $(1, T, X_1, X_2)'$ for $M_{0}$ and $(1, T,
X_1)'$ for $M_{1}$. Let $\widehat{\beta_{M_{0}}}$ and
$\widehat{\beta_{M_{1}}}$, respectively, be the least square
estimators of the parameters in $M_{0}$ and $M_{1}$. The asymptotic
variance of $\widehat{\beta_{M_{0}}}$ for sample size $n$ is then
given as:
$$\asyvar(\widehat{\beta_{M_{0}}}) = \frac{A^{-1} \var(Y \mid T, X)}{n} = \frac{A^{-1} \phi}{n},$$
where

\[ A = \left( \begin{array}{cccc}
    1 & \theta & \e(X_1) & \e(X_2) \\
    \theta & \theta & \e(TX_1) & \e(TX_2) \\
    \e(X_1) & \e(TX_1) & \e({X_1}^2) & \e(X_1X_2) \\
    \e(X_2) & \e(TX_2) & \e(X_1X_2) & \e({X_2}^2) \end{array}
\right).\]

By solving $A^{-1}$ and extract the (2, 2)th element which is variance
multiplier of the coefficient of $T$, we have that
$$\asyvar(\widehat{\delta_{M_{0}}}) = \frac{(W_{X_1X_1}W_{X_2X_2} - {W_{X_1X_2}}^2) \phi}{n\theta(1 - \theta)(V_{X_1X_1}V_{X_2X_2} - {V_{X_1X_2}}^2)},$$
where
$$W_{X_1X_2} =  \e(X_1X_2) -  \e(X_1)\e(X_2) = \cov(X_1,X_2),$$
$$V_{X_1X_2} = \e(X_1X_2) - \theta \e(X_1 \mid T=1)\e(X_2 \mid T=1) - (1 - \theta) \e(X_1 \mid T=0)\e(X_2 \mid T=0),$$
$$W_{X_1X_1} = \e({X_1}^2) - [\e(X_1)]^2 = \var(X_1),$$
$$W_{X_2X_2} = \e({X_2}^2) - [\e(X_2)]^2 = \var(X_2),$$
and
$$V_{X_1X_1} = \e({X_1}^2) - \theta[\e(X_1 \mid T=1)]^2 - (1 - \theta)[\e(X_1 \mid T=0)]^2,$$
$$V_{X_2X_2} = \e({X_2}^2) - \theta[\e(X_2 \mid T=1)]^2 - (1 - \theta)[\e(X_2 \mid T=0)]^2.$$
By (\ref{eq:ex2t}),
$$V_{X_2X_2} = \var(X_2) = W_{X_2X_2}$$
and
$$V_{X_1X_2} = \cov(X_1,X_2) = W_{X_1X_2},$$
where, by (\ref{eq:x1x2t}),
$$\cov(X_1,X_2) = \e\{\cov(X_1 \mid T, X_2 \mid T)\} + \cov\{\e(X_1 \mid T), \e(X_2 \mid T)\} = 0.$$
Hence,
\begin{equation}
  \label{eq:asyvarm01}
  \asyvar(\widehat{\delta_{M_{0}}}) = \frac{\phi \var(X_1) / [n\theta(1 - \theta)]}{\e({X_1}^2) - \theta[\e(X_1 \mid T=1)]^2 - (1 - \theta)[\e(X_1 \mid T=0)]^2}.
\end{equation}
For $M_{1}$, by (\ref{eq:x1x2t}),
\begin{eqnarray}
  \asyvar(\widehat{\delta_{M_{1}}}) &=& \frac{W_{X_1X_1}}{n\theta(1 - \theta)V_{X_1X_1}} \cdot \var(Y \mid T, X_1) \nonumber \\
  &=& \frac{W_{X_1X_1}}{n\theta(1 - \theta)V_{X_1X_1}} \cdot \{\phi + {b_2}^2\var(X_2 \mid T, X_1)\} \nonumber \\
  \label{eq:asyvarm1} 
  &=&  \frac{(\phi + {b_2}^2\tau) \var(X_1) / [n\theta(1 - \theta)]}{\e({X_1}^2) - \theta[\e(X_1 \mid T=1)]^2 - (1 - \theta)[\e(X_1 \mid T=0)]^2}.
\end{eqnarray}

Comparing (\ref{eq:asyvarm01}) and (\ref{eq:asyvarm1}), we have that
$\asyvar(\widehat{\delta_{M_{0}}}) <
\asyvar(\widehat{\delta_{M_{1}}})$ unless $X_2$ is random noise rather
than the linear predictor \ie, $b_2 = 0$ which equalises the two
asymptotic variances.

\begin{lemma}
  \label{lem:asymp}
  Under the given distributional assumptions (\ref{eq:model2}),
  (\ref{eq:modelt}) and (\ref{eq:model1}), suppose the propensity
  variable $\ld$ is not the same as the linear predictor $\lp$, and
  $\ld$ is independent of variables that are merely response
  predictors. Then the asymptotic variance of the estimated $\ace$ from the
  linear regression by adjusting for the estimated propensity variable
  $\ld^*$ is more precise than that by adjusting for the population
  propensity variable $\ld$.
\end{lemma}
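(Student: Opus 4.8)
The plan is to reduce the assertion to the computation already done for the toy example --- the comparison of (\ref{eq:asyvarm01}) with (\ref{eq:asyvarm1}) --- by a suitable linear change of coordinates. First, by \corref{sample} the estimate of $\ace$ obtained by regressing $Y$ on $(T,\ld^*)$ coincides, in every sample, with the one obtained by regressing $Y$ on $(T,X)$; the two estimators therefore have the same limiting distribution, so it suffices to compare the asymptotic variance of the coefficient of $T$ in the regression of $Y$ on $(T,X)$ with that in the regression of $Y$ on $(T,\ld)$. For a homoscedastic linear regression of $Y$ on a constant, $T$ and additional regressors $W$, the Frisch--Waugh--Lovell (partitioned-inverse) identity gives $\asyvar(\widehat\delta) = \var(Y\mid T,W)/(n\,\rho_W)$, where $\rho_W$ is the residual variance of $T$ about its population linear projection on $(1,W)$; so I must (i) compare the residual variances of $Y$ for $W=X$ and for $W=\ld$, and (ii) compare $\rho_X$ with $\rho_\ld$.

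Next I would set up a linear bijection $X \leftrightarrow (\ld,R)$, with $R$ a $(p-1)$-dimensional linear function of $X$ collecting the directions that are relevant only for the response. In the model (\ref{eq:model1}) this can be done concretely by taking the rows of the matrix that defines $R$ orthogonal to $\mu_1-\mu_0$, equivalently $\Sigma$-orthogonal to $\gamma = \Sigma^{-1}(\mu_1-\mu_0)$ of (\ref{eq:gamma}): then $R$ and $\ld$ are conditionally uncorrelated given $T$, hence, by joint normality within each arm, conditionally independent given $T$ --- which is the hypothesis of the lemma --- and $\e(R\mid T)$, $\var(R\mid T)$ are free of $T$. Since $\ld$ is treatment-sufficient, $T\cip R\mid\ld$ in the observational regime; and since $\Sigma$ is positive definite, $\cov(\ld,R)=0$ and $\e(R\mid\ld)=\e(R)$ is constant. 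In these coordinates the regression of $Y$ on $(T,X)$ becomes the regression of $Y$ on $(T,\ld,R)$, and the linear predictor of (\ref{eq:model2}) splits as $\lp = b\transp X = a\,\ld + c\transp R$ for suitable constants $a,c$, with $c\ne 0$: this is the content of the assumption that $\lp$ is not an affine function of $\ld$, i.e.\ that $\ld$ is not the same as $\lp$.

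For (i): conditioning on $(T,\ld)$ removes $a\,\ld$ and leaves $c\transp R$ plus the noise of (\ref{eq:model2}), so $\var(Y\mid T,\ld) = \phi + \var(c\transp R\mid T) = \phi + c\transp\var(R\mid T)\,c > \phi = \var(Y\mid T,X)$, the inequality being strict since $\Sigma$ is positive definite and $c\ne 0$; this quantity is free of $T$ and of $\ld$, so the homoscedastic formula does apply to the $(T,\ld)$ regression. For (ii): adjoining $R$ to the linear regression of $T$ on $(1,\ld)$ accomplishes nothing, because $\cov\big(T-L(T\mid 1,\ld),\,R-\e(R)\big) = \cov\big(\e(T\mid\ld),R\big) = 0$, where the two equalities use, respectively, $\cov(\ld,R)=0$ together with $T\cip R\mid\ld$, and then $\e(R\mid\ld)$ being constant; hence $\rho_X = \rho_{(\ld,R)} = \rho_\ld$. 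Combining (i) and (ii),
$$\asyvar(\widehat{\delta_{\ld^*}}) = \asyvar(\widehat{\delta_X}) = \frac{\phi}{n\,\rho_\ld} < \frac{\phi + c\transp\var(R\mid T)\,c}{n\,\rho_\ld} = \asyvar(\widehat{\delta_\ld}),$$
which is the assertion.

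The hard part is step (ii): making the decomposition $X\leftrightarrow(\ld,R)$ precise in the general model (with $\Sigma$ not necessarily diagonal) and verifying that the purely response-predicting directions carry no residual linear information about $T$ once $\ld$ is included; both the treatment-sufficiency of $\ld$ and the assumed conditional independence of $\ld$ from the response-only predictors are needed here. In the toy example this is visible as the fact that the variance multiplier of $T$ is literally identical in (\ref{eq:asyvarm01}) and (\ref{eq:asyvarm1}), only the residual variance of $Y$ changing, by $b_2^2\tau$ --- the analogue of $c\transp\var(R\mid T)\,c$. A minor remaining point is the homoscedasticity of $\var(Y\mid T,\ld)$, which relies on the two treatment arms sharing the covariance matrix $\Sigma$ in (\ref{eq:model1}).
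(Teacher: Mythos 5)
Your argument is correct, and it takes a genuinely different (and more general) route than the paper. The paper never writes out a proof of \lemref{asymp}: the lemma is offered as a summary of the immediately preceding toy computation ($p=2$, diagonal $\Sigma$ with equal entries $\tau$, $\e(X_2\mid T)$ free of $T$), in which the $4\times 4$ and $3\times 3$ moment matrices are inverted explicitly and the two asymptotic variances (\ref{eq:asyvarm01}) and (\ref{eq:asyvarm1}) are seen to share the same multiplier of $T$ while the residual variance inflates from $\phi$ to $\phi+b_2^2\tau$; the identification of the $X$-adjustment with the $\ld^*$-adjustment is, exactly as in your first step, \corref{sample}. You replace that special-case calculation by a coordinate-free one: the $\Sigma$-orthogonal splitting $X\leftrightarrow(\ld,R)$, the checks that $\e(R\mid T)$ is free of $T$, $\cov(\ld,R)=0$ and $\cov(T,R)=0$, so that the Frisch--Waugh denominator $\rho_W$ is unchanged when $R$ is dropped, and the strict inflation $c\transp\var(R\mid T)\,c>0$ whenever $b$ is not proportional to $\gamma$. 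This buys the statement in full generality (any $p$, any common positive-definite $\Sigma$, needing only $\mu_1\ne\mu_0$ so that the splitting is a bijection), and it also shows that the lemma's auxiliary hypothesis that $\ld$ be independent of the purely response-predicting variables is not really an extra assumption in your formulation: with $R$ chosen orthogonal to $\mu_1-\mu_0$ the conditional independence given $T$ holds automatically by within-arm normality. What the paper's route buys instead is the explicit closed forms (\ref{eq:asyvarm01})--(\ref{eq:asyvarm1}), quantifying the precision loss as $b_2^2\tau$, of which your $c\transp\var(R\mid T)\,c$ is the correct general analogue. Two small remarks: in step (ii) your covariance chain is right but the attributions are compressed --- $\cov(T,R)=0$ already follows from $\e(R\mid T)$ being constant, so the treatment-sufficiency $T\cip R\mid\ld$ is not strictly needed there; and the constancy of $\var(Y\mid T,\ld)$, which you correctly flag, is indeed precisely where the shared covariance matrix $\Sigma$ of (\ref{eq:model1}) enters, consistent with the paper's warning that the argument does not survive heteroscedasticity.
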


\subsubsection{Simulations}
\label{sec:simhomo}
Simulations are carried out for numerical illustration.  Suppose we
have the following true values for the paramters in (\ref{eq:model2}),
(\ref{eq:modelt}) and (\ref{eq:model1}): $p = 2, d = 0, \delta = 0.5,
b=(0,1)\transp, \phi = 1, \theta = 0.5, \mu_1 = (1, 0)\transp, \mu_0 =
(0, 0)\transp$, $\Sigma=I_2$.

Then the population linear predictor is $\lp=X_2$, with
$$ Y \mid (X, T, F_T)\sim \normal(\frac{1}{2} T + X_2, 1),$$
while the population linear discriminant $\ld=X_1$ which is not
predictive to $Y$.  Since for any regime $f = 0, 1, \emptyset$,
$$\e_f(Y \mid X_1, T) = \e_f\{\e_f(Y \mid X, T) \mid X_1, T\} = \frac{1}{2} T$$
and
$$\var_f(Y \mid X_1, T) = \e_f\{\var_f(Y \mid X, T) \mid X_1, T\} + \var\{\e_f(Y \mid X, T)\mid X_1, T\} = 2.$$
The conditional distribution of $Y$ given $(X_1, T)$, for any regime,
is then given by
  $$Y \mid (X_1, T, F_T)\sim \normal(\frac{1}{2} T, 2).$$

  To investigate the performance of the population-based as well as
  sample-based LP and PV, we now
  consider four linear regression models:

  \quad $M_0$: $Y$ on $T$ and $X$ ($X = (X_1, X_2)$),
    
  \quad $M_1$: $Y$ on $T$ and $X_1$,

  \quad $M_2$: $Y$ on $T$ and $X_2$,

  \quad $M_3$: $Y$ on $T$ and $\ld^*$,

  where $M_0$ is the full model with all parameters unknown. In $M_1$,
  by setting $b_2 = 0$, the true linear discriminant $\ld = X_1$ is
  fitted. While fitting the true linear predictor $\lp = X_2$,
  equivalent to setting $b_1 = 0$, we get $M_2$. Note that all these
  models are ``true". For $M_1$ the true value of $b_1$ is 0, and the
  true residual variance is 2, as against 1 for $M_0$ and
  $M_2$. Finally, for any dataset with no information of parameters,
  we construct the estimated propensity variable $\ld^*$, and then fit
  the model $M_3$.

  In each model $M_k$, for $k=0,1,2,3$, the least-squares estimator
  $\hat{\delta_k}$ is unbiased for $\delta = 0.5$. By the Gauss-Markov
  theorem and \corref{sample},
$$\var(\hat{\delta_0}) = \var(\hat{\delta_3}) \geq \var(\hat{\delta_2}).$$
Asymptotically, we have that $\var{.\emph{asy}}(\hat{\delta_0}) =
\var{.\emph{asy}}(\hat{\delta_3}) = 5/n,$
$\var{.\emph{asy}}(\hat{\delta_2})= 4/n,$ and
$\var{.\emph{asy}}(\hat{\delta_1})= 10/n$.  It is
indeed asymptotically less precise to adjust for PV than for its estimate in our model, which is in accordance with
\lemref{asymp}.

For the sample analysis, 200 simulated datasets are generated,
each of size $n = 20$. Shown in Fig. \ref{fig:lrpvpdecm} are the
empirical distributions of $\hat{\delta_k}$ for all four
models. Unsurprisingly, in terms of precision (from high to low),
first comes the $\lp$; next is the estimated propensity variable
$\ld^*$ (or the estimated linear predictor $\lp^*$), or equivalently, $X$ (=
($X_1, X_2$)); and last comes the true propensity variable $\ld = X_1$.

\begin{figure}[htp]
  \begin{center}
    \includegraphics[scale=0.45, angle=270]{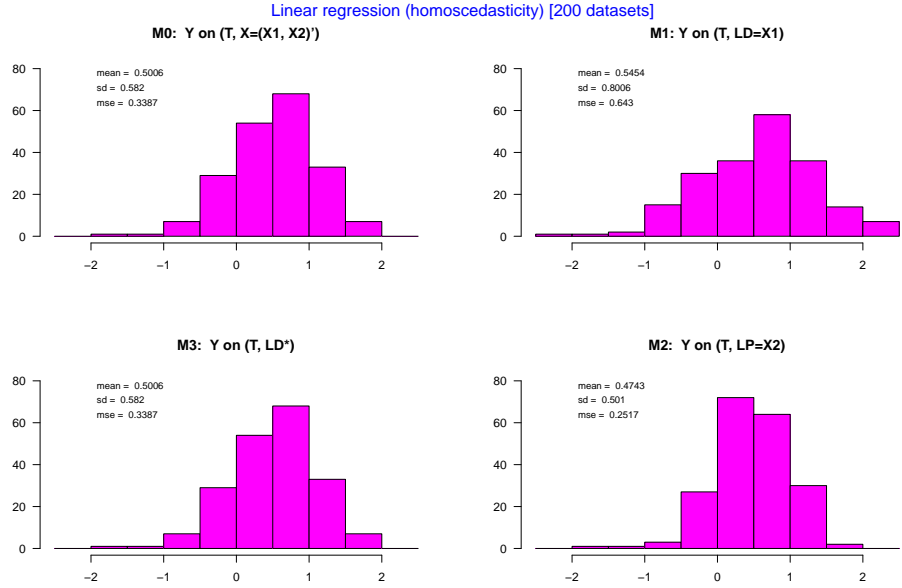}
    \caption{Estimates of $\ace$ by regression on (clockwise):
      1. $X_{1}$ and $X_{2}$. \quad 2. population linear discriminant
      (propensity variable) $X_{1}$. \quad 3.  population linear
      predictor $X_{2}$. \quad 4. estimated linear discriminant
      (propensity variable) $\ld^*$ .}
    \label{fig:lrpvpdecm}
  \end{center}
\end{figure}

\subsection{Normal linear model (heteroscedasticity)}
\label{sec:diffcov}
Investigation in the homoscedasticity case is simple because PV is
equivalent to LD, where linearity makes analysis straightforward. If
covariance matrices of the conditional distribution of $X$ for the two
treatment groups are not identical, it turns out that adjusting for
$\pv$ is not appropriate.

Suppose now that, keeping all other distributional assumptions of
\secref{nlm} unchanged, (\ref{eq:model1}) is re-specified as:
  $$X  \mid (T, F_T=\emptyset) \sim \normal(\mu_T, \Sigma_T)$$
  with different covariance matrices $\Sigma_0$ and $\Sigma_1$ for
  $T=0$ and $T=1$.  The distribution of $X$ in all regimes then
  becomes
  $$X \mid F_T \sim (1-\theta)\, \normal(\mu_0, \Sigma_0) + \theta\, \normal(\mu_1, \Sigma_1).$$
  Accordingly,
$$  \log \Lambda =  c + \qd$$
where
$$c = - \frac{1}{2}\{\log(\det\Sigma_1) - \log(\det\Sigma_0) + \mu_1\transp \Sigma_1^{-1}\mu_1 - \mu_0\transp \Sigma_0^{-1}\mu_0\}$$
and
\begin{equation}
  \label{eq:qd}
  \qd := \left(\Sigma_1^{-1}\mu_1 -\Sigma_0^{-1}\mu_0\right)\transp X - \half
  X\transp \left(\Sigma_1^{-1}-\Sigma_0^{-1}\right)X.
\end{equation}
$\qd$ is the {\em quadratic discriminant\/} including a linear term
and a quadratic term of $X$, distinguishing the observational
distributions of $X$ given $T=0, 1$.  We see that $\qd$ is a minimal
treatment-sufficient covariate, and thus a PV but no longer a linear
function of $X$.

Because of the balancing property of PS (or PV), it now follows
that $\ace = \e(\sce_\qd),$ with
$$\sce_\qd = \e_1( Y \mid \qd) - \e_0( Y \mid \qd).$$
Since $\qd$ is quadratic in $X$, $Y$ is no longer linear in $\qd$, the
coefficient of $T$ by adjusting for $\pv$ (= $\qd$) in the linear
regression does not provide exact $\ace$. However, as computation of
the expectations in the above formula is non-trivial, one might wish
to replace $\e_\emptyset( Y \mid T, \qd)$ by the linear regression of
$Y$ on $(T, \qd)$, and approximate the estimated \ace.  Alternatively,
one can take non-parametric approaches such as matching or
subclassification on $\qd$ \cite{pr:84}.  A number of papers on
various matching approaches for causal effects have been collected in
\cite{dr:06}. More recently, statistical software becomes available
for multivariate and PS matching in R \cite{js:11}.

Now we discuss subclassifications and linear regressions based on
$\qd$, compared to linear regressions based on $\lp$ and $\ld$. The linear
discriminant is again in the form
$$\ld =(\mu_1-\mu_0)\transp \Sigma^{-1} X,$$
but with $\Sigma = (1-\theta) \Sigma_0 + \theta \Sigma_1,$ the sum of the
weighted dispersion matrices of the two treatment groups.  Form the
formulae of $\qd$ and \ld, we conclude that it is LD that
comprises all variables with expectations depending on 
$T$. In a DAG representation of this scenario, each of such variables must have an arrow pointing to
$T$. However, the genuine PV (= $\qd$) may depend on all the
components of $X$, according to its quadratic term in
(\ref{eq:qd}). Only with homoscedasiticity, PV is equivalent to
$\ld$ and includes all variables associated with $T$.

Although $\ld$ is not a sufficient covariate here, \thmref{gendis}
still applies. It enables us to identify $\ace$ from the linear
regression of $Y$ on $(T,\ld)$, which is equivalent to the linear
regression of $Y$ on $(T,X)$.  However, other authors claim that only if
$\ld$ is highly correlated with PS, adjustment for $\ld$
works well in regressions \cite{pr:83}. This may attribute to
different scenarios considered, \ie, in our model $Y$ is linearly
related to $X$ while non-linear in $X$ in theirs.

\subsubsection{Simulations}
\label{sec:simulations2}

Simulated data is based on the above model, with the parameters:
$p=20$, $d= 0$, $\delta = 0.5$, $\theta = 0.5$, $b = (0, 1, \ldots,
0)\transp,$ $\mu_0 = (0, \ldots, 0)\transp,$ and $\mu_1 = (0.5, 0, 0,
\ldots, 0)\transp.$ Also, $\Sigma_0$ is set, diagonally, to $0.8$ for
the first ten entries and to $1.3$ for the remaining entries, and
$\Sigma_1$ the identity matrix.

We then have, for the population, that
$$\ld = \frac{5}{9}X_{1},$$
$$\pv = \qd = \frac{1}{2}X_{1} + \frac{1}{8} \sum_{i=1}^{10} X_{i}^2 - \frac{3}{26} \sum_{j=11}^{20} X_{j}^2,$$
and $\lp = X_{2}$.  By estimating
$\mu_0$ and $\mu_1$, $\Sigma_0$ and $\Sigma_1$ from observed data, we
can compute sample-based $\ld^*$ and $\qd^*$.

\begin{figure}[!htbp]
  \begin{center}
    \leavevmode
    \includegraphics[scale=0.45, angle=270]{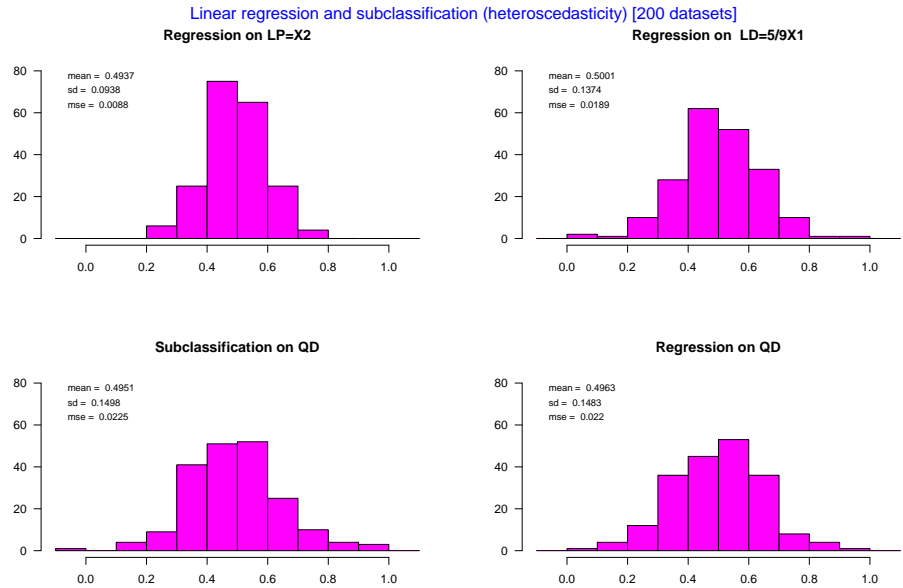}
    \caption{Estimates of $\ace$ by 4 different methods (clockwise):
      1.  Regression on population linear predictor $\lp =
      X_{2}$. \quad 2. Regression on population linear discriminant
      $\ld = \frac{5}{9}X_{1}$. \quad 3. Regression on population
      quadratic discriminant (propensity variable) $\qd$. \quad
      4. Subclassification on $\qd$.}
    \label{fig:pslrsubtruenew}
  \end{center}
\end{figure}

The results from 200 simulated datasets, each of size 500, are given
in Fig. \ref{fig:pslrsubtruenew}.  The first three plots (clockwise)
are from the linear regressions of $Y$ on, respectively $(T,X_{2})$,
$(T, \ld)$, and $(T, \qd)$.  The last plot is the result of
subclassification on PV ( = $\qd$). That is, 500
observations are divided into 5 subclasses with equal number of
observations in each, based on the values of $\qd$. Within each
subclass, units from the two treatment groups are roughly comparable
such that the average difference of the response may be interpreted as
the estimated $\sce$.  Then $\ace$ is estimated by summing over
 $\sce$s, each weighted by $1/5$.  Note that the sample size has increased, since we must have at least one observation
for each treatment in each subclass.

\begin{figure}[!htbp]
  \begin{center}
    \leavevmode
    \includegraphics[scale=0.45, angle=270]{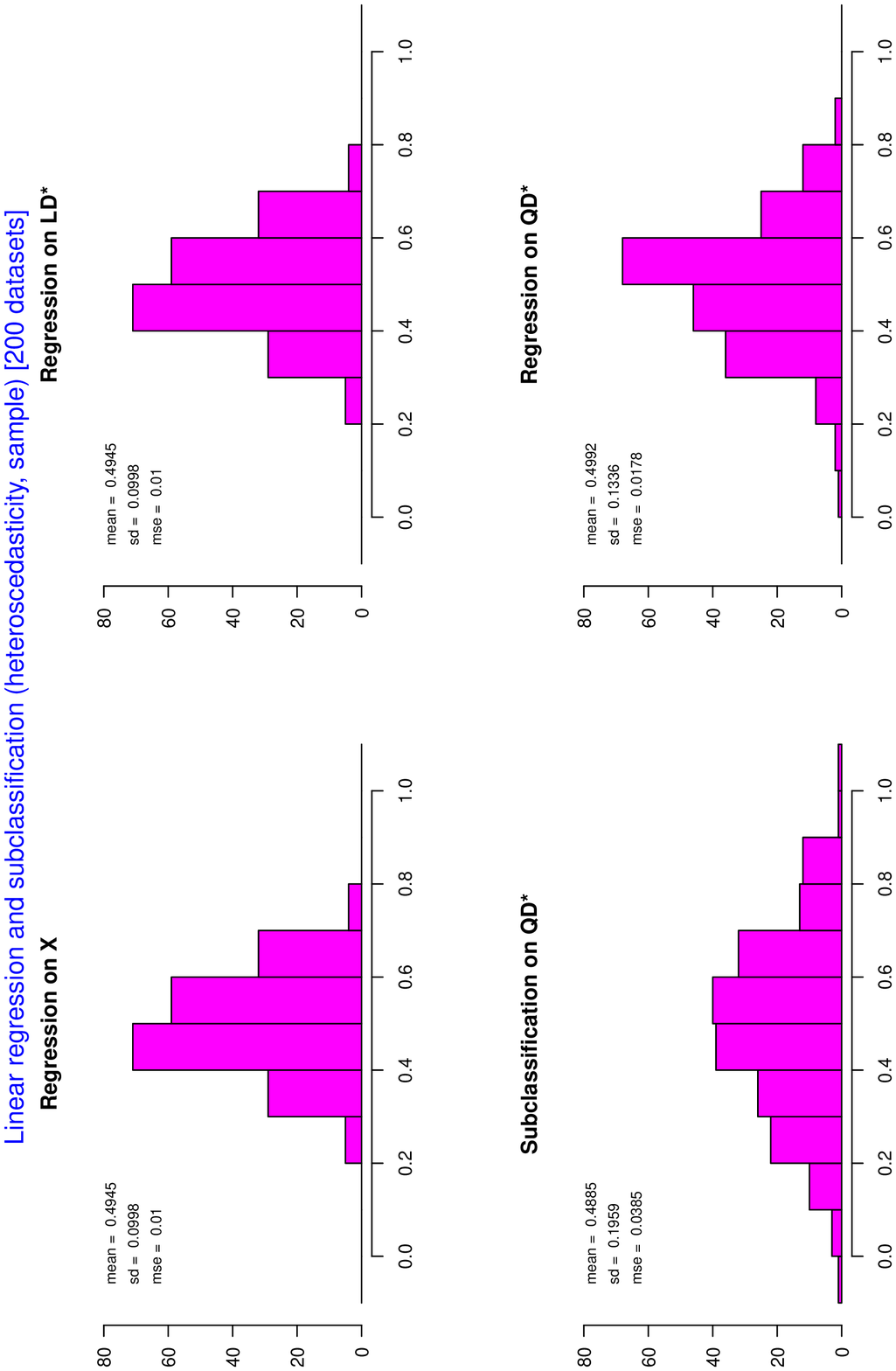}
    \caption{Estimates of $\ace$ by 4 different methods (clockwise):
      1. Regression on sufficient covariate $X$.  \quad 2. Regression
      on sample linear discriminant $\ld^*$. \quad 3. Regression on
      sample quadratic discriminant (propensity variable)
      $\qd^*$. \quad 4. Subclassification on $\qd^*$.}
    \label{fig:pslrsubnew}
  \end{center}
\end{figure}

Since $\ld$ and $\qd$ are practically unknown, they need be estimated
from the observed data. Also, we do not know exactly the response
predictors or the confounders, full set of the observed $X$ may have to be
used for analysis.

Fig. \ref{fig:pslrsubnew} gives the results from the same 200 datasets
as above. Again, the first three plots are the results of linear
regressions of $Y$, but on, respectively, $(T, X)$, $(T, \ld^*)$, and
$(T, \qd^*)$, where $\ld^*$ and $\qd^*$ are the sample linear and
quadratic discriminants.  Shown in the last plot is the result of
subclassification on EPV ( = $\qd^*$). Unsurprisingly, by
comparing the mean, standard deviation and mean squared error
of the estimated $\ace$, regression of $Y$ on ($T,\lp = X_{2}$) comes
the best among all eight approaches in Fig. \ref{fig:pslrsubtruenew}
and Fig. \ref{fig:pslrsubnew}. Regressing on ($T$,
$X$) is no better than regressing on ($T$, $X_{2}$) because all
variables except $X_{2}$ in $X$ are not predictors but noise of
$Y$. In confirmation of the theory in \secref{modconstruct},
regressing on $\ld^*$, rather than on $X$, has absolutely no
effect on the estimated $\ace$.  $\ld^*$ outperforms $\ld$ because the
latter does not contain the response predictor.  Regressions on
\ld, $\qd$, and on $\qd^*$ are roughly equal, because apart from
$X_{1}$, the distributions of the remaining 19 variables are
identical, with rather small multipliers. Thus, the two quadratic
terms in $\qd$ are roughly the same, and $\qd \approx
\frac{1}{2}X_{1}$ works approximately as a function of a single
variable $X_{1}$. Last comes subclassification on the quadratic PV,
particularly when it is estimated.

\subsection{Propensity analysis in logistic regression}
\label{sec:logit}

As already investigated, propensity analysis in linear regression is
fairly straightforward. In many cases, however, response $Y$ is not
linear in $X$. We know that despite its name, generalised linear model
(GLM) is not a linear model, because it is a non-linear function of
the response that is linearly related to its predictors.  Logistic
regression is widely applied as a type of GLM if the response is
binary. For example, doctors often record the outcome of a surgery on
a patient as either ``cured" or ``not cured". Next, a logistic model is used in our illustrative study.

\subsubsection{Model construction}
\label{sec:logitmodel}
For simplicity, suppose that $Y,T (1 \times 1)$ and $X (p \times 1)$
are all binary and components of $X$ are mutually independent, The
joint distribution of $(F_T, X, T, Y)$ is constructed as follows:

\begin{equation}
  \label{eq:binx}
  X  \mid F_T \sim Ber(\pi)
\end{equation}
\begin{equation}
  \label{eq:bintx}
  \logit\{\Pr_\emptyset(T \mid X)\} = c + a \transp X
\end{equation}
\begin{equation}
  \label{eq:binytx}
  \logit\{\Pr_f(Y \mid T, X)\} = d + \delta T + b \transp X,
\end{equation}
for $f = 0,1,\emptyset$; and $\pi$ is ($p \times 1$). Property
\ref{pr:pos2} and $P_f(Y = 1 \mid T, X) \in (0, 1)$ are required such
that (\ref{eq:bintx}) and (\ref{eq:binytx}) are well-defined.

It is immediately seen that $X$ is a strongly sufficient covariate and
\begin{eqnarray*}
  \ace &=& \e_\emptyset\{\e_1(Y\mid X)\} - \e_\emptyset\{\e_0(Y\mid X)\}\\
  &=& \e_\emptyset\{\Pr_\emptyset(Y\mid  T=1, X)\} - \e_\emptyset\{\Pr_\emptyset(Y\mid T=0, X)\}\\
  &=& \e_\emptyset\{\frac{1}{1 + e^{-(d + \delta + b \transp X)}} - \frac{1}{1 + e^{-(d + b \transp X)}}\}.
\end{eqnarray*}
If the parameters are set as follows:
\begin{equation}
  \label{eq:par1}
  p = 3, \quad \pi=(\pi_1, \pi_2, \pi_3)\transp, \quad a = (a_1, a_2, 0)\transp,\quad b=(0, b_2, b_3)\transp, 
\end{equation} 
then the response predictor is $b_2X_2 + b_3X_3$ and $\pv = a_1X_1 + a_2X_2.$ The conditional independence properties in our
model can be read off Fig. \ref{fig:logit}.
\begin{figure}[h]
  \sidecaption \leavevmode
  \includegraphics[scale=0.2, angle=270]{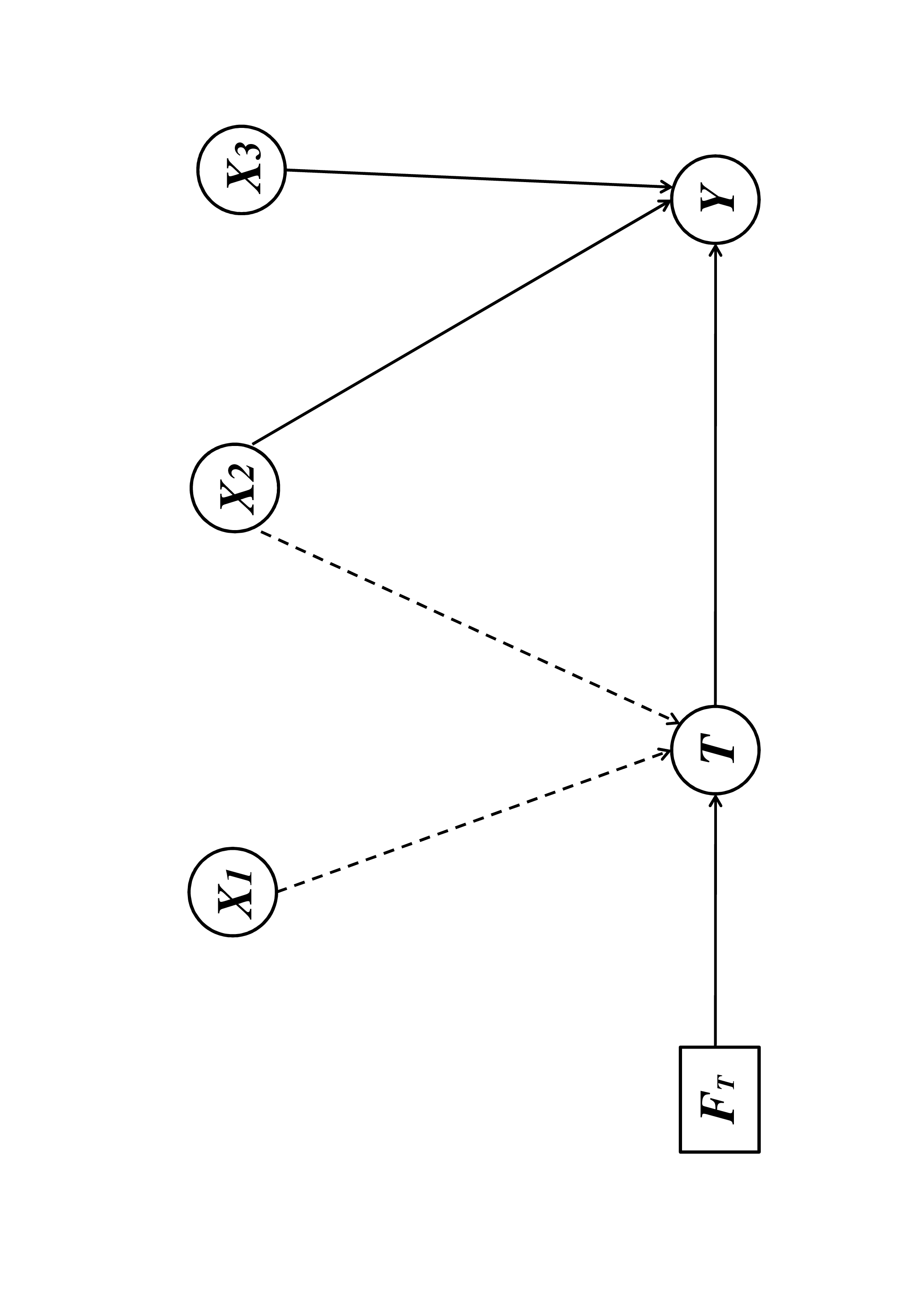}
  \caption{DAG for the logistic model}
  \label{fig:logit}
\end{figure}
Then we have that
 $$  \logit\{P(Y \mid T, X_2, X_3)\}  =  \logit\{P(Y \mid T, X)\}  = d + \delta T + b_2X_2 + b_3X_3,$$
 and
 \begin{eqnarray*}
   P(Y = 1 \mid T, X_1, X_2) &=& P(Y = 1 \mid T, X_2)\\
   &=& \e\{\frac{1}{1+e^{-(d + \delta T + b_2X_2 + b_3X_3)}} \mid T, X_2\}\\
   \label{eq:ytx2}
   &=&  \frac{\pi_3}{1+e^{-(d + \delta T + b_2X_2 + b_3)}} + \frac{1-\pi_3}{1+e^{-(d + \delta T + b_2X_2)}}
 \end{eqnarray*}
 which does not depend on $X_1$.  And we have that
 \begin{eqnarray}
   \ace &=& \pi_2\pi_3 \{\frac{1}{1 + e^{-(d + \delta + b_2 + b_3)}} - \frac{1}{1 + e^{-(d + b_2 + b_3)}}\}   \nonumber \\ 
   &\;\;\;\;\;+& (1-\pi_2)\pi_3 \{\frac{1}{1 + e^{-(d + \delta + b_3)}} - \frac{1}{1 + e^{-(d + b_3)}}\} \nonumber \\
   &\;\;\;\;\;+& \pi_2(1-\pi_3) \{\frac{1}{1 + e^{-(d +\delta + b_2)}} - \frac{1}{1 + e^{- (d + b_2)}}\}   \nonumber \\
   \label{eq:logitace}
   &\;\;\;\;\;+& (1-\pi_2)(1-\pi_3) \{\frac{1}{1 + e^{-(d + \delta)}} - \frac{1}{1 + e^{-d}}\},
 \end{eqnarray}
 which is determined by $d, \delta, b_2, b_3$, $\pi_2$ and $\pi_3$. This
 extremely simple example, with only three components of $X$ that are all binary, already results in a complicated
 form for \ace, which would be even worse for high dimensional $X$ and various types of variables. Next, instead of simulation, we conduct propensity
 analysis on real data.

 \subsubsection{Propensity analysis of custodial sanctions study}
 We illustrate the method with the aid of a study involving 511
 subjects sentenced to prison in 1980 by the California Superior
 Court, and 511 offenders sentenced to probation following conviction
 for certain felonies \cite{gb:13}. These probationers were matched to the
 prisoners on county of conviction, condition offence type and risk of
 imprisonment quantitative index, so as to bring into the final sample
 the most serious offenders on probation and the least serious
 offenders sentenced to prison. The structure of this study
 corresponds to the (partially matched) case-control design. In fact, 
 this is analogous to the regression discontinuity designs where only
 observations near the cut-off of the risk score are included for causal effect analysis \cite{gwi:07}. 
 We were to compare the average causal effect of judicial sanction (probation
 or prison) on the probability of re-offence. We specify variables as
 follows.

 \begin{itemize}
 \item Treatment $T$: taking values 0 (probation) and 1 (prison);
 \item Response $Y$: occurrence of recidivism (re-offence);
 \item Pre-treatment variable $X$: including 17 carefully selected non-collinear
   variables that we can reasonably assume to make $X$ a strongly
   sufficient covariate.
 \end{itemize}

 Simple random multiple imputation by bootstrapping (R package: mi)
 was applied to deal with missing data. We then considered two
 logistic regressions for the imputed data:
 \begin{enumerate}
 \item $Y$ on $(T, X)$, where $X$ includes all the 17 variables.

   \vspace{2mm}

 \item $Y$ on $(T,$ EPS), where EPS is the propensity score estimated
   from the logistic regression of $T$ on all the 17 variables. In selecting these variables, we took advantage of the possibility of
   trying various sets of covariates in the model, without inflating the type I error since these regressions do not involve the response information.
    The distribution densities of the two treatment groups are shown in
   Fig. \ref{fig:psdens}, where we see a large overlapping area.
 \end{enumerate}

\begin{figure}[h]
  \sidecaption \leavevmode
  \includegraphics[scale=0.4, angle=270]{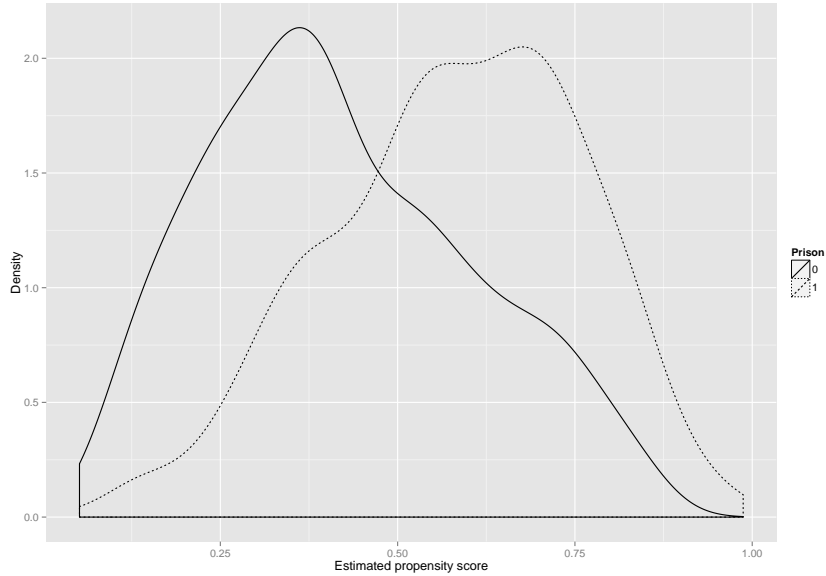}
  \caption{Distribution density comparison of the estimated propensity
    score: prison vs. probation. }
  \label{fig:psdens}
\end{figure}

Shown in Tab. \ref{tab:logit.result} are the results. In this case, regression on the full set of
$X$ and on the estimated PS makes little difference, since the summary
statistics from the two approaches are quite similar. Although the
negative values of both the coefficients imply reduced re-offence for
the imprisonment, they are not statistically significant.

\begin{table}[h]
  \caption{Coefficients of judicial sanction (``prison" with respect to ``probation") from logistic regressions: 1. $Y$ on ($T, X$); 2. $Y$ on ($T$, EPS)}
  \vspace{1 mm}
  \centering
  \begin{tabular}{l rrr}
    \hline

    {Regression} & Coefficient & Standard Error & p-value \\[0.5ex]
    \hline

    {$Y$ on ($T, X$)}      & -0.1631 & 0.1579 & 0.3014 \\
    {$Y$ on ($T$, EPS)} & -0.1713 & 0.1503 & 0.2545 \\[0.5ex]
    \hline
  \end{tabular}
  \label{tab:logit.result}
\end{table}

\section{Double robustness}

Since the underlying response regression model (RRM): $Y \mid (X, T,
F_T = \idle)$ and the propensity model (PM): $T \mid (X, F_T = \idle)$
are most likely unknown, one may specify parametric models based on
previous experience.  Moreover, as discussed in
\S~\ref{sec:dimreduct}, a strongly sufficient covariate can be reduced
by two alternative approaches from specified models, which enables
estimating $\ace$ by either method as follows:
\begin{enumerate}
\item Adjustment for response predictors from correctly specified RRM;
  
  \vspace{2mm}
  
\item Adjustment for a $\pv$ (or $\ps$) from correctly specified PM,
  either in response regression (if RRM is correctly specified), or
  otherwise, by non-parametric approaches, \eg, matching.
\end{enumerate}
Due to lack of knowledge, it may well be that \emph{at least one} model is
misspecified. Little could be done if both models are wrong. Thus, our
interest is to find a single estimator that produces a good estimate,
given that at least one model is correct.

$\ace$ is normally estimated from the observed data. Suppose there are
$n$ individuals in an observational study. Observations $(x_i, t_i,
y_i)$, where $i = 1, .., n$, are generated from the joint distribution
$(X_i, T_i, Y_i)$ that are independent and identically
distributed. The estimation of the $\ace$ requires estimates of the
expected response for both treatment groups assigned by
intervention. We have already demonstrated that, within the $\dt$
framework, $\ace$ is identifiable from pure observational data if $X$
is a strongly sufficient covariate. Here, $X$ is again assumed to be
strongly sufficient and thus satisfies Properties \ref{pr:cov},
\ref{pr:sufcov} and \ref{pr:pos2}.

\subsection{Augmented inverse probability weighted estimator}

To construct the augmented inverse probability weighted (AIPW)
estimator, we discuss two scenarios:
\begin{itemize}
\item \textbf{Correct RRM}: Suppose that we know the RRM.  For
  convenience, we write $\e_t(Y)$ as $\mu_t$, so
  \begin{equation}
    \label{eq:ety}
    \mu_t = \e_\emptyset[\e_\emptyset(Y\;|\;X,T = t)]
  \end{equation}
  since $X$ is strongly sufficient. Hence, in observational studies,
  $\e_\emptyset(Y\;|\;X,T = t)$ is an unbiased estimator of $\mu_t$,
  for $t=0, 1$. Consequently, $\e_\emptyset(Y\;|\;X,T = 1) -
  \e_\emptyset(Y\;|\;X,T = 0)$ is an unbiased estimator of $\ace$.

  \vspace{2mm}

\item \textbf{Correct PM}: Consider that the PM is correct, \ie,
  $\pi(X) = \Pr_\emptyset(T = 1 \mid X)$.
\end{itemize}
\begin{lemma}
  \label{lem:aceht}
  Suppose that the propensity model is correct and that $X$ is a strongly sufficient
  covariate. Then
\end{lemma}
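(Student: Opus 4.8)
The plan is to show that the inverse-probability-weighted (Horvitz--Thompson) functional recovers each counterfactual mean, namely
$$\mu_1 = \e_\emptyset\!\left[\frac{TY}{\pi(X)}\right], \qquad \mu_0 = \e_\emptyset\!\left[\frac{(1-T)Y}{1-\pi(X)}\right],$$
from which $\ace=\mu_1-\mu_0$ inherits the obvious inverse-probability-weighted form by subtraction. The whole argument is iterated conditioning: peel off $\e_\emptyset(\cdot\mid X,T)$ first, then $\e_\emptyset(\cdot\mid X)$, using that $T$ is binary, and finally appeal to strong sufficiency of $X$ to recognise the outer average.

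First I would record that strong sufficiency of $X$ supplies, through Property~\ref{pr:pos2}, that $\pi(X)\in(0,1)$ almost surely $[\Pr_\emptyset]$, so the weights $1/\pi(X)$ and $1/(1-\pi(X))$ are finite a.s.; with $Y$ integrable this makes the displayed expectations well defined. Then I would evaluate the right-hand side. Conditioning first on $(X,T)$, and using that $\pi(X)$ is a function of $X$,
$$\e_\emptyset\!\left[\frac{TY}{\pi(X)}\right] = \e_\emptyset\!\left[\frac{T}{\pi(X)}\,\e_\emptyset(Y\mid X,T)\right].$$
Now condition on $X$. Because $T$ is binary, $T\,g(X,T) = T\,g(X,1)$ for every function $g$, and correctness of the propensity model gives $\e_\emptyset(T\mid X) = \pi(X)$; hence
$$\e_\emptyset\!\left[\frac{T}{\pi(X)}\,\e_\emptyset(Y\mid X,T)\;\Big|\;X\right] = \frac{\e_\emptyset(Y\mid X,T=1)}{\pi(X)}\,\e_\emptyset(T\mid X) = \e_\emptyset(Y\mid X,T=1).$$
Taking the outer expectation and invoking (\ref{eq:ety}) --- which is exactly where strong sufficiency of $X$ is used, via \thmref{pos2new} and Property~\ref{pr:cov} --- gives $\e_\emptyset[\e_\emptyset(Y\mid X,T=1)] = \mu_1$. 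The argument for $\mu_0$ is identical after replacing $T$ by $1-T$ and $\pi(X)$ by $1-\pi(X)$, noting $\e_\emptyset(1-T\mid X) = 1-\pi(X)$.

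I do not expect a genuinely hard step here: once the reweighting is written down the identity is a two-line application of the tower property together with $T$ being $\{0,1\}$-valued. The only points deserving care are (i) the bookkeeping around positivity and integrability, so that $TY/\pi(X)$ has a finite mean; and (ii) being explicit that the \emph{causal} content --- the identification of $\mu_t=\e_t(Y)$ with a functional of the observational law of $(X,T,Y)$ --- is delivered entirely by (\ref{eq:ety}), \ie\ by the strong-sufficiency assumption, while the reweighting step merely replaces the outer average of $\e_\emptyset(Y\mid X,T=t)$ over $X$ by an equivalent average that does not require stratifying on $X$.
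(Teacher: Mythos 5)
Your proposal is correct and follows essentially the same route as the paper: iterated conditioning to reduce $\e_\emptyset\{TY/\pi(X)\}$ to $\e_\emptyset\{\e_\emptyset(Y\mid X,T=1)\}$ (the paper conditions on $X$ directly and writes $\e_\emptyset(TY\mid X)=\e_\emptyset(Y\mid X,T=1)\Pr_\emptyset(T=1\mid X)$, which is the same calculation as your two-stage tower argument), and then the appeal to (\ref{eq:ety}), i.e.\ strong sufficiency, to identify the result with $\mu_1$, with $\mu_0$ handled symmetrically. Your added remarks on positivity (Property~\ref{pr:pos2}) and integrability are a harmless refinement of the paper's argument.
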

\begin{equation}
  \label{eq:ace2}
  \ace = \e_\emptyset\{\frac{T}{\pi(X)}Y\} - \e_\emptyset\{\frac{1-T}{1-\pi(X)}Y\},
\end{equation} 
where $\e_\emptyset\{\frac{T}{\pi(X)}Y\} = \mu_1$ and
$\e_\emptyset\{\frac{1-T}{1-\pi(X)}Y\} = \mu_0.$

\begin{proof}
  \begin{eqnarray*}
    \e_\emptyset\{\frac{T}{\pi(X)}Y\} &=& \e_\emptyset\{\e_\emptyset(\frac{T}{\pi(X)}Y\;|\;X)\} = \e_\emptyset\{\frac{1}{\pi(X)}\e_\emptyset(TY\;|\;X)\} \\
    &=& \e_\emptyset\{\frac{1}{\pi(X)}\e_\emptyset(Y\;|\;X, T=1)\Pr_\emptyset(T=1\;|\;X)\} \\
    &=& \e_\emptyset\{\e_\emptyset(Y\;|\;X, T=1)\} = \mu_1   \qquad \qquad \qquad \by \; (\ref{eq:ety}).
  \end{eqnarray*}
\end{proof}
It automatically follows that $\e_\emptyset\{\frac{1-T}{1-\pi(X)}Y\} =
\mu_0.$ By \lemref{aceht}, we see that, under the observational
regime, $\frac{T}{\pi(X)}Y$ and $\frac{1-T}{1-\pi(X)}Y$ are unbiased
estimators of $\mu_1$ and $\mu_0$ respectively.

One may have noticed that the two terms for $\ace$ in (\ref{eq:ace2})
are similar with the Horvitz-Thompson (HT) estimator for sample
surveys \cite{ht:52}. They are, however, different in various
aspects. The aim of HT estimator is to estimate the mean of a finite
population $Y_1, ..., Y_N$, denoted by $\mu = N^{-1}\sum_{i=1}^N Y_i$,
from a stratified sample of size $n$ drawn without replacement. For
$i=1,...,N$, let $\Delta_i$ be binary sampling indicator ($\Delta_i =
1$: unit $i$ is in sample; $0$: unit $i$ is not in sample), and
$\pi_{i}$ be the probability that unit $i$ being drawn in the
sample. Then HT estimator is given by:
\begin{equation}
  \label{eq:ht}
  \muhat_{HT} = N^{-1}\sum_{i=1}^N \frac{\Delta_{i}}{\pi_{i}}Y_{i},
\end{equation} 
where $\pi_i$ is pre-specified, and thus known in a sample survey
design. But the propensity model $\pi(X)$ in (\ref{eq:ace2}) is
normally unknown. Moreover, HT estimator is applied to estimate the
mean of a finite population, while $\acehat$ is used to estimate the
mean of a superpopulation \footnote{In causal system, finite number of
  individuals in a study is called ``population", which can be
  regardes as a sample from a larger "superpopulation" of
  interest}. HT estimator depends on pre-specified sampling scheme,
but observations involved in $\acehat$ are generated from, and thus
are dependent on, the joint distribution of $(X, T, Y)$ in the
observational regime. Nevertheless, both HT estimator and $\acehat$
are formed by means of the inverse probability weights $1/\pi_i$ or
$1/\pi(X)$. In fact, HT estimator is also termed as the inverse
probability weighted (IPW) estimator.

Sample surveys are closely related to missing data because the
information is missing for those not sampled. So IPW estimator is
frequently used in missing data models in the presence of partially
observed response \cite{hb:05, jc:06, jk:07}. As counterfactuals are
also regarded as missing data, IPW estimator can be used in the
potential response framework with half observed information, to make
causal inference of treatment effect under the assumptions of
``strongly ignorable treatment assignment": $(Y(0),
Y(1)){\cip}T\;|\;X$ and ``no unobserved confounders" \cite{hb:05,
  zt:07}.

\subsubsection{Augmented inverse probability weighted estimator}
From above discussion, there exists an unbiased estimator of $\ace$ if
either RRM or PM is correct. However, unknown RRM and PM makes it
impossible to decide whether they are correct. Nevertheless, the
augmented inverse probability weighted (AIPW) estimator can be
constructed by combining the two models in the following alternative
forms:
\begin{eqnarray}
  \label{eq:e1yaipw1}
  \muhat_{1, \aipw} &=&  m(X) +  \frac{T}{\pi(X)}(Y - m(X)) \nonumber \\
  \label{eq:e1yaipw2}
  &=& \frac{T}{\pi(X)}Y + [1- \frac{T}{\pi(X)}]m(X),
\end{eqnarray}
and similarly,
\begin{eqnarray}
  \label{eq:e0yaipw1}
  \muhat_{0, \aipw} &=&  m(X) +  \frac{1-T}{1-\pi(X)}(Y - m(X)) \nonumber \\
  \label{eq:e0yaipw2}
  &=& \frac{1-T}{1-\pi(X)}Y + [1- \frac{1-T}{1-\pi(X)}]m(X),
\end{eqnarray}

where $m(\cdot)$ and $\pi(\cdot)$ are arbitrary functions of $X$.  As
also indicated in its name, $\muhat_{t, \aipw}$ is the sum of the IPW
estimator and an augmented term.
\begin{lemma}
  \label{lem:dr}
  Suppose that $X$ is a strongly sufficient covariate. The estimator
  $\muhat_{t, \aipw}$ has the property of \textbf{double
    robustness}. That is, $\muhat_{t, \aipw}$ is an unbiased estimator
  of the population mean given $T=t$ by intervention, if either
  $\pi(X) = p_\idle(T = 1\;|\;X)$ or $m(X) = \e_\idle(Y\;|\;X, T=t)$.
\end{lemma}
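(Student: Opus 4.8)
The plan is to handle only the $t=1$ case; the $t=0$ case is entirely symmetric (replace $T$, $\pi(X)$ and the correctly specified $m$ by $1-T$, $1-\pi(X)$ and $\e_\emptyset(Y\mid X,T=0)$, and invoke (\ref{eq:e0yaipw2})). Throughout I work in the observational regime, and I keep both equivalent forms of the estimator in (\ref{eq:e1yaipw2}) at hand, using whichever is convenient: the inverse-probability-weighted form $\muhat_{1,\aipw} = \frac{T}{\pi(X)}Y + [1-\frac{T}{\pi(X)}]m(X)$ when the propensity model is trusted, and the ``residual'' form $\muhat_{1,\aipw} = m(X) + \frac{T}{\pi(X)}(Y-m(X))$ when the response model is trusted. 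In each case the target is $\e_\emptyset(\muhat_{1,\aipw}) = \mu_1$.

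For the first case, assume $\pi(X) = p_\emptyset(T=1\mid X)$ with $m(\cdot)$ arbitrary. Here \lemref{aceht} already delivers $\e_\emptyset\{\frac{T}{\pi(X)}Y\} = \mu_1$, so it only remains to show the augmentation term has mean zero. I would condition on $X$ and use $\e_\emptyset(T\mid X)=\pi(X)$ to get $\e_\emptyset\{[1-\frac{T}{\pi(X)}]m(X)\mid X\} = m(X)\{1 - \pi(X)/\pi(X)\} = 0$ almost surely; the outer expectation over $X$ then gives the claim.

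For the second case, assume $m(X) = \e_\emptyset(Y\mid X,T=1)$ with $\pi(\cdot)$ arbitrary (bounded away from $0$, so the weight is well defined). Using the residual form, $\e_\emptyset(m(X)) = \e_\emptyset\{\e_\emptyset(Y\mid X,T=1)\} = \mu_1$ by (\ref{eq:ety}), because $X$ is strongly sufficient. For the correction term I would condition on the \emph{pair} $(X,T)$, pulling out the $(X,T)$-measurable weight: $\e_\emptyset\{\frac{T}{\pi(X)}(Y-m(X))\mid X,T\} = \frac{T}{\pi(X)}\{\e_\emptyset(Y\mid X,T) - m(X)\}$, which vanishes identically — on $\{T=1\}$ the bracket is $\e_\emptyset(Y\mid X,T=1)-m(X)=0$ by correctness of the response model, and on $\{T=0\}$ the prefactor $T$ is zero. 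Hence the unconditional expectation of the correction is zero and $\e_\emptyset(\muhat_{1,\aipw})=\mu_1$; combining the two cases proves double robustness.

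The computation is routine, so there is no genuine ``hard part''; the only things to get right are (i) choosing the correct conditioning set in each case — conditioning on $X$ alone when the propensity model is trusted, and on $(X,T)$ when the response model is trusted — since this is exactly what makes the cross terms collapse, and (ii) the implicit regularity needed for the weights and conditional expectations to be finite ($\pi(X)$ bounded away from $0$ and $1$, $Y$ integrable), which holds in the parametric models under consideration and for which Property~\ref{pr:pos2} supplies the analogous positivity for the true propensity score.
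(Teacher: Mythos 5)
Your proof is correct and takes essentially the same route as the paper's: the same two-scenario split, with Scenario 1 handled via \lemref{aceht} plus the vanishing of the augmentation term after conditioning on $X$, and Scenario 2 handled by showing the correction term $\frac{T}{\pi(X)}(Y-m(X))$ has mean zero when $m(X)=\e_\idle(Y\mid X,T=1)$, so that $\e_\idle(\muhat_{1,\aipw})=\e_\idle[m(X)]=\mu_1$ by (\ref{eq:ety}). The only (immaterial) difference is that in Scenario 2 you condition on $(X,T)$ while the paper conditions on $X$ alone and uses $\e_\idle(TY\mid X)=m(X)\e_\idle(T\mid X)$; these are equivalent computations, and your explicit mention of the integrability and positivity regularity is a harmless addition.
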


\begin{proof}
  \vspace{2mm} By similarity, we only give proof of $\muhat_{1,
    \aipw}$. Consider the following two scenarios:

  \vspace{2mm}

  \textbf{Scenario 1: } $\pi(X) = p_\idle(T = 1\;|\;X)$ and $m(X)$ is
  an arbitrary function of $X$.

  It is easily seen that $\muhat_{1, \aipw}$ is unbiased, from the
  proof of \lemref{aceht}. Since conditional on $X$, the last term in
  (\ref{eq:e1yaipw2}) vanishes when we take expectation of $\muhat_{1,
    \aipw}$ in the observational regime.
 
  \vspace{3 mm}

  \textbf{Scenario 2: } $m(X)=\e_\idle(Y\;|\;X, T=1)$ and $\pi(X)$ is
  an arbitrary function of $X$.

  By (\ref{eq:e1yaipw1}), we have that
  \begin{eqnarray*}
    \e(\muhat_{1, \aipw}) &=& \e[m(X) + \frac{T}{\pi(X)}(Y - m(X))]\\
    &=& \e\{\e[m(X)\;|\;X]\} + \e\{\e[\frac{T}{\pi(X)}(Y - m(X))\;|\;X]\}\\
    &=&  \e[m(X)] + \e\{\frac{\e(TY \mid X) - m(X)\e(T \mid X)}{\pi(X)}\}\\
    &=&  \e[m(X)] = \mu_1 \qquad \qquad \qquad \qquad \qquad \by \; (\ref{eq:ety}).
  \end{eqnarray*}
\end{proof}
Indeed, if either $\pi(X) = p_\idle(T = 1\;|\;X)$ or $m(X) =
\e_\idle(Y\;|\;X, T=1)$, not necessarily both, $\muhat_{1, \aipw}$ is
unbiased. Consequently,
$$\acehat_{\aipw} = \muhat_{1, \aipw} - \muhat_{0, \aipw}.$$

\begin{thm}
  \label{thm:drace}
  Suppose that $X$ is a strongly sufficient covariate. Then the AIPW
  estimator $\acehat_{\aipw}$ is doubly robust.
\end{thm}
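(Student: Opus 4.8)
The plan is to derive \thmref{drace} directly from \lemref{dr} together with the linearity of expectation; no new computation is needed beyond what was already done in the proof of that lemma. Recall that $\acehat_{\aipw} = \muhat_{1, \aipw} - \muhat_{0, \aipw}$ and that, by the reasoning behind \lemref{aceht}, $\ace = \mu_1 - \mu_0$ with $\mu_t = \e_t(Y)$. So it suffices to show that, whichever of the two models is correctly specified, each of $\muhat_{1, \aipw}$ and $\muhat_{0, \aipw}$ is unbiased for the corresponding $\mu_t$ in the observational regime, and then subtract.

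First I would treat the case in which the propensity model is correct, $\pi(X) = p_\idle(T = 1 \mid X)$. Scenario 1 of the proof of \lemref{dr} shows $\e_\emptyset(\muhat_{1, \aipw}) = \mu_1$ for an arbitrary augmentation function $m(\cdot)$; the identical argument with $T$ replaced by $1-T$ and $\pi(X)$ by $1-\pi(X)$ gives $\e_\emptyset(\muhat_{0, \aipw}) = \mu_0$. Taking the difference yields $\e_\emptyset(\acehat_{\aipw}) = \ace$. Next I would treat the case in which the response regression model is correct. Here the only point needing care is that the augmentation function appropriate to the $T=1$ term is $m_1(X) := \e_\idle(Y \mid X, T=1)$, while that for the $T=0$ term is the (in general different) function $m_0(X) := \e_\idle(Y \mid X, T=0)$; a single correctly specified RRM supplies both. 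Applying Scenario 2 of the proof of \lemref{dr} with $m = m_1$ gives $\e_\emptyset(\muhat_{1, \aipw}) = \mu_1$ for arbitrary $\pi(\cdot)$, and the mirror-image computation with $m = m_0$ gives $\e_\emptyset(\muhat_{0, \aipw}) = \mu_0$; subtracting again gives $\e_\emptyset(\acehat_{\aipw}) = \ace$.

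Since $\acehat_{\aipw}$ is thus unbiased for $\ace$ whenever at least one of PM and RRM is correct, it is doubly robust, which is exactly the claim. There is no genuine obstacle: the ``hard part'' is merely the bookkeeping that lets the two augmentation functions differ across the two $\mu_t$ components while still being delivered by one correctly specified RRM, and the observation that the $\mu_0$-computation is the verbatim mirror of the $\mu_1$-computation already carried out in \lemref{dr}.
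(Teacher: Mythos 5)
Your proposal is correct and follows essentially the same route as the paper, which simply notes that since both $\muhat_{1, \aipw}$ and $\muhat_{0, \aipw}$ are doubly robust by \lemref{dr}, so is their difference $\acehat_{\aipw}$. Your additional bookkeeping (the mirror-image computation for $\muhat_{0,\aipw}$ and the observation that one correct RRM supplies both $m_1$ and $m_0$) merely spells out what the paper leaves implicit.
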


To prove \thmref{drace}, we simply apply the fact that both
$\muhat_{1, \aipw}$ and $\muhat_{0, \aipw}$ are doubly robust, so is
their difference.

\subsection{Parametric models}
Suppose that we specify two parametric working models: the propensity
working model $\pi(X; \alpha)$ and the response regression working
model $m(T, X; \beta)$. Then by (\ref{eq:e1yaipw2}) and
(\ref{eq:e0yaipw2}), we have, for the estimated $\e_1(Y)$ and
$\e_0(Y)$, that

\begin{equation}
  \label{eq:mu1}
  \muhat_{1,\aipw} = n^{-1}\{\sum_{i=1}^n \frac{T_{i}}{\pi(X_{i}; \hat{\alpha})}Y_{i} + [1-\frac{T_{i}}{\pi( X_{i}; \hat{\alpha})}]m(1, X_{i};\hat{\beta})\}
\end{equation}
and
\begin{equation}
  \label{eq:mu0}
  \muhat_{0,\aipw} = n^{-1}\{\sum_{i=1}^n \frac{1-T_{i}}{1-\pi(X_{i}; \hat{\alpha})}Y_{i} + [1-\frac{1-T_{i}}{1-\pi( X_{i}; \hat{\alpha})}]m(0, X_{i};\hat{\beta})\}
\end{equation}
respectively. Therefore, by (\ref{eq:mu1}) and (\ref{eq:mu0}), we have
that
\begin{eqnarray}
  \acehat_{AIPW} &=& \hat{\mu}_{1,AIPW} - \hat{\mu}_{0,AIPW} \nonumber \\
  \label{eq:aipw}
  &=& n^{-1}\{\sum_{i=1}^n [\frac{T_{i}}{\pi(X_{i}; \hat{\alpha})} - \frac{1-T_{i}}{1-\pi(X_{i}; \hat{\alpha})}](Y_{i}-m(T_{i}, X_{i};\hat{\beta})\},
\end{eqnarray}
which is doubly robust, \ie, $\acehat_{AIPW}$ is a consistent and
asymptotically normal estimator of $\ace$ if either of the working
models is correctly specified.

\subsubsection{Discussion}
Kang and Schafer \cite{jk:07} state that there are various ways to
construct an estimator which is doubly robust. In our view, they are
essentially the same, \ie, it must be in the same (or similar) form of
$\aipw$ estimator which is constructed by combining RRM and PM. Other
constructions proposed in \cite{jk:07} are just variations of $\aipw$
estimator. For example, in (\ref{eq:ht}), instead of using $N$ as
denominator for each unit, they use normalised weights $\sum_{i=1}^N
\frac{\Delta_{i}}{\pi_{i}}$. Such normalised weights are especially
useful for precision improvement in the case that subjects with very
small probabilities of being sampled are actually drawn from the
population. Because if $N$ is used as the weight, these subjects will
influence the estimated average response enormously, and consequently,
result in poor precision.

Kang and Schafer \cite{jk:07} have also investigated the precision
performance of an doubly robust estimator when both $\pi(X)$ and
$m(X)$ are moderately misspecified. They state that ``in at least some
settings, two wrong models are not better than one". This seems
obvious because the performance of this estimator will depend on the
degree of misspecification of both models. This can be easily analysed
in theory but far more complicated in practice, as one can not have a
good control of specifying models $\pi(X)$ and $m(X)$ based on limited
observed data and previous experience (if any). Therefore, it would be
difficult to measure to what extent the specified models are different
from the true ones.

\subsection{Precision of $\acehat_{AIPW}$}
\subsubsection{Known propensity score model}
\label{sec:eff}
We already see that $\acehat_{AIPW}$ is an unbiased and doubly robust
estimator of $\ace$. Then how can we choose an arbitrary function
$m(X_i)$ to minimise the variance of $\acehat_{AIPW}$ given correct
PM? Suppose that in an experiment, we know $\pi(X_i) = \Pr(T_i =
1\;|\;X_i)$. Then in terms of the variance, we have that
\begin{eqnarray*}
  \var(\acehat_{AIPW}) &=& \var\{n^{-1}[\sum_{i=1}^n (\frac{T_{i}}{\pi(X_{i})} - \frac{1-T_{i}}{1-\pi(X_{i})})(Y_{i}-m(X_{i})]\}\\
  &=& n^{-2}\{\var[\sum_{i=1}^n (\frac{T_{i}}{\pi(X_{i})} - \frac{1-T_{i}}{1-\pi(X_{i})})Y_{i}]\\
  &\;\;\;\;\; +& \var[\sum_{i=1}^n (\frac{T_{i}}{\pi(X_{i})} - \frac{1-T_{i}}{1-\pi(X_{i})})m(X_{i})]\\
  &\;\;\;\;\; -& 2\cov[\sum_{i=1}^n (\frac{T_{i}}{\pi(X_{i})} - \frac{1-T_{i}}{1-\pi(X_{i})})Y_{i}, \sum_{i=1}^n (\frac{T_{i}}{\pi(X_{i})} - \frac{1-T_{i}}{1-\pi(X_{i})})m(X_{i})]\}\\
  &=& n^{-2}\{\var(\acehat_{HT}) + \e[\sum_{i=1}^n \frac{m^2(X_{i})}{\pi(X_{i})(1-\pi(X_{i}))}]\\
  &\;\;\;\;\; -& 2\e[\sum_{i=1}^n \frac{m(X_{i})\mu_{1i}}{\pi(X_{i})(1-\pi(X_{i}))}-\frac{m(X_{i})(\mu_{1i}-\mu_{i})}{(1-\pi(X_{i}))^2}]\\
  &=& n^{-2}\{\var(\acehat_{HT}) + \e[\sum_{i=1}^n \frac{m^2(X_{i})}{\pi(X_{i})(1-\pi(X_{i}))}\\
  &\;\;\;\;\; -& 2\sum_{i=1}^n\{ \frac{\mu_{1i}}{\pi(X_{i})(1-\pi(X_{i}))}-\frac{\mu_{1i}-\mu_{i}}{(1-\pi(X_{i}))^2}\}m(X_{i})]\},
\end{eqnarray*}
where $\mu_{1i} = \e_\idle(Y_{i}\;|\;X_{i}, T_{i} = 1)$ and $\mu_{i} =
\e_\idle(Y_{i}\;|\;X_{i}).$

By minimising the quadratic function of $m(X_{i})$ in the expectation,
it follows that
\begin{eqnarray}
  \label{eq:mimin1}
  m(X_{i}) &=& [1-\pi(X_{i})]\mu_{1i}+\pi(X_{i})\mu_{0i} \nonumber \\
  \label{eq:mimin2}
  &=& [1-\pi(X_{i})]\e_\idle(Y_{i}\;|\;X_{i}, T_{i} = 1) + \pi(X_{i})\e_\idle(Y_{i}\;|\;X_{i}, T_{i} = 0),
\end{eqnarray}
which minimises the variance of $\acehat_{AIPW}$ among all functions
of $X_{i}$.  In fact, if either $\pi(X_{i}) = p_\idle(T_{i} =
1\;|\;X_{i})$, or (\ref{eq:mimin2}) holds, $\acehat_{AIPW}$ is
unbiased, and thus is doubly robust.

Let $m_{1}(X_{i})$ and $m_{0}(X_{i})$ denote the regressions of $Y$ on
$X_{i}$ for the two treatment groups in the observational regime. It
is unnecessary to require that $m_{1}(X_{i}) =
\e_\idle(Y_{i}\;|\;X_{i}, T_{i} = 1)$ and that $m_{0}(X_{i}) =
\e_\idle(Y_{i}\;|\;X_{i}, T_{i} = 0)$. As long as $m(X_{i})$ is
specified as the sum of the weighted expectations as in the form of
(\ref{eq:mimin2}), $m(X_{i})$ minimises the variance of the estimated
$\ace$.

Same result is obtained in \cite{dr:08} as (\ref{eq:mimin2}), by
minimising a weighted mean squared error of $m(X_{i})$. We now discuss
an alternative approach provided in \cite{dr:08}. Let $\tilde{Y_{i}}$
denote a weighted response in a form as follows:
\begin{equation}
  \tilde{Y_{i}} = [\{\frac{1}{\pi(X_{i})} -1\}T_{i} + \{\frac{1}{1-\pi(X_{i})} - 1\}(1-T_{i})]Y_{i}.
\end{equation} 
Then by (\ref{eq:mimin2}), it follows that
\begin{eqnarray*}
  \label{eq:alternative}
  m(X_{i}) &=& \frac{1-\pi(X_{i})}{\pi(X_{i})}\e_\idle(Y_{i}\;|\;X_{i}, T_{i} = 1)\Pr(T = 1\;|\;X)\\
  &\;\;\;\;\; +& \frac{\pi(X_{i})}{1-\pi(X_{i})}\e_\idle(Y_{i}\;|\;X_{i}, T_{i} = 0)\Pr(T = 0\;|\;X)\\
  &=& \frac{1-\pi(X_{i})}{\pi(X_{i})}\e_\idle(T_{i}Y_{i}\;|\;X_{i}) + \frac{\pi(X_{i})}{1-\pi(X_{i})}\e_\idle[(1-T_{i})Y_{i}\;|\;X_{i}]\\
  &=& \e_\idle\{[\frac{1-\pi(X_{i})}{\pi(X_{i})}T_{i} + \frac{\pi(X_{i})}{1-\pi(X_{i})}(1-T_{i})]Y_{i}\;|\;X_{i}\}\\
  &=& \e_\idle\{[(\frac{1}{\pi(X_{i})} -1)T_{i} + (\frac{1}{1-\pi(X_{i})} - 1)(1-T_{i})]Y_{i}\;|\;X_{i}\}\\
  \label{eq:ystar}
  &=& \e_\idle(\tilde{Y_{i}}\;|\;X_{i}),
\end{eqnarray*}
where $m(X_{i})$ is obtained by simply regressing $\tilde{Y_{i}}$ on
$X_{i}$, rather than regressing $Y_{i}$ on both $X_{i}$ and
$T_{i}$. However, an obvious disadvantage of this approach is its low
precision. When individuals with the PS close to 0 are actually in the
treatment group and/or those with the PS close to 1 are actually
assigned to the control group, the weights $1/\pi(X_{i})$ or
$1/(1-\pi(X_{i}))$ of these units will be very large, which leads to
corresponding responses being highly influential, which is dangerous.
In fact, it may be even worse than the HT estimator as we will see
next.

To show the difference of these approaches, we have implemented Monte
Carlo computations for four estimators of $\acehat_{AIPW}$:

\begin{enumerate}

\item by (\ref{eq:mimin2}) with $\e_\idle(Y_{i}\;|\;X_{i}, T_{i} = 1)$
  and $\e_\idle(Y_{i}\;|\;X_{i}, T_{i} = 0)$ estimated by regressing
  $Y_{i}$ on $(X_{i}, T_{i})$.

  \vspace{2 mm}

\item by (\ref{eq:mimin2}) with $\e_\idle(Y_{i}\;|\;X_{i}, T_{i} = 1)$
  and $\e_\idle(Y_{i}\;|\;X_{i}, T_{i} = 0)$ estimated by regressing
  $Y_{i}$ on $X_{i}$ for the treatment group and control group
  separately.

  \vspace{2 mm}

\item by Horvitz-Thompson approach, i.e. without covariate adjustment.

  \vspace{2 mm}

\item by regression of $\tilde{Y_{i}}$ on $X_{i}$ as in
  (\ref{eq:ystar}).

\end{enumerate}

The results of simulated 100 datasets are shown in
Fig. \ref{fig:dreff}. The first two approaches give similar
results. That is, we can estimate $\e_\idle(Y_{i}\;|\;X_{i}, T_{i} =
1)$ and $\e_\idle(Y_{i}\;|\;X_{i}, T_{i} = 0)$ either simultaneously
from the response regression on the treatment and $X$, or separately
from the response regression only on $X$ for each treatment group. As
expected, the last approach generates several extreme estimates
relative to others, which makes its variance even much larger than
that of the HT estimator.
\begin{figure}[ht]
  \begin{center}
    \leavevmode
    \includegraphics[scale=0.5, angle=0]{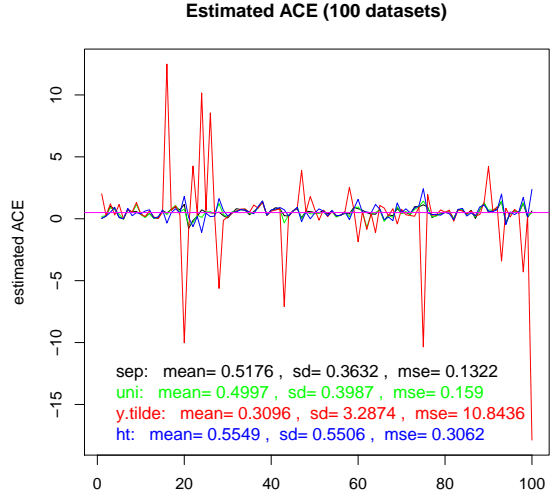}
    \caption{Precision of the estimated $\ace$ based on: (1) specified
      model for $\e_\idle(Y_{i}\;|\;X_{i}, T_{i})$; (2) specified
      models for $\e_\idle(Y_{i}\;|\;X_{i})$ separately for both
      groups; (3) Horvitz-Thompson estimator; (4) regression of
      $\tilde{Y_{i}}$ on $X_{i}$.}
    \label{fig:dreff}
  \end{center}
\end{figure}

\subsubsection{Known response regression model}
Suppose that $\e_\idle(Y_{i}\;|\;X_{i}, T_{i} = 1)$ and
$\e_\idle(Y_{i}\;|\;X_{i}, T_{i} = 0)$ are both known but not the
PM. Then the AIPW estimator can be constructed as:
$$\acehat_{AIPW} = n^{-1}\{\sum_{i=1}^n [\frac{T_{i}}{g(X_{i})} - \frac{1-T_{i}}{1-g(X_{i})}](Y_{i}-m(X_{i})\},$$
where
$$m(X_{i}) = (1-g(X_{i}))\e(Y_{i}\;|\;X_{i}, T_{i} = 1)+g(X_{i})\e(Y_{i}\;|\;X_{i}, T_{i} = 0),$$
and $g(X_{i})$ is an arbitrary function of $X_{i}$.

So $\acehat_{AIPW}$ is unbiased and its variance is computed as
follows.
\begin{eqnarray*}
  \var(\acehat_{AIPW}) &=& \var\{n^{-1}[\sum_{i=1}^n (\frac{T_{i}}{g(X_{i})} - \frac{1-T_{i}}{1-g(X_{i})})(Y_{i}-m(X_{i})]\}\\
  &=& n^{-2}\var\{\sum_{i=1}^n (\frac{T_{i}}{g(X_{i})} - \frac{1-T_{i}}{1-g(X_{i})})(Y_{i}-[1-g(X_{i}))\mu_{1i}+g(X_{i})\mu_{0i}]\}\\
  &=& n^{-2}\var\{\sum_{i=1}^n (\mu_{1i}-\mu_{0i}) + \frac{T_{i}}{g(X_{i})}(Y_{i} - \mu_{1i}) - \frac{1-T_{i}}{1-g(X_{i})}(Y_{i} - \mu_{0i})\}\\
  &=& n^{-2}\var\{\sum_{i=1}^n (\mu_{1i}-\mu_{0i})\}\\
  &\;\;\;\;\; +& n^{-2}\e\{\var[\sum_{i=1}^n \frac{T_{i}}{g(X_{i})}(Y_{i} - \mu_{1i}) - \frac{1-T_{i}}{1-g(X_{i})}(Y_{i} - \mu_{0i})\;|\;X_{i}]\}\\
  &>& n^{-2}\var\{\sum_{i=1}^n (\mu_{1i}-\mu_{0i})\} = \var(\acehat_{RRM}).
\end{eqnarray*}
Hence, we conclude that, for each individual, if the
conditional expectations of the response given $X_{i}$ for both groups
are known or correctly specified, then $\acehat_{AIPW}$ will be
less precise than the estimated ACE from the response regressions.

\subsubsection{Discussion}
If the PM is known, then the variance of $\acehat_{AIPW}$ is minimised
when $m(X_{i})$ is specified as in (\ref{eq:mimin2}) -- where separate
specification of $m_1(X_{i})$ and $m_0(X_{i})$ are not
necessary. Rubin and van de Laan \cite{dr:08} has introduced a
weighted response serving as an alternative, but we have shown, by
simulations, that it could result in large variance of the estimated
$\ace$ and possibly larger than the HT estimator. In the case that the
RRM is correctly specified, \ie, $m_1(X_{i}) =
\e_\idle(Y_{i}\;|\;X_{i}, T_{i} = 1)$ and $m_0(X_{i}) =
\e_\idle(Y_{i}\;|\;X_{i}, T_{i} = 0)$, then these two models rather
than the AIPW estimator should be used to estimate $\ace$ for higher
precision of the estimator.

\section{Summary}
In this chapter, we have addressed statistical causal inference using
Dawid's decision-theoretic framework within which assumptions are, in
principle, testable.  Throughout, the concept of sufficient covariate
plays a crucial role. We have investigated propensity analysis in a
simple normal linear model, as well as in logistic model,
theoretically and by simulation.  Adding weight to previous evidence
\cite{jh:98, jr:92, kh:03, wcw:04, ss:07}, our results show that
propensity analysis does little in improving estimation of the
treatment causal effect, either unbiasedness or precision.  However,
as part of the augmented inverse probability weighted estimator that
is doubly robust, correct propensity score model helps provide
unbiased average causal effect.

\section*{Appendix}
\addcontentsline{toc}{section}{Appendix}
\textbf{R code of simulations and data analysis}

\begin{verbatim}

################################################################
Figure 5: Linear regression (homoscedasticity)              
----------------------------------------------------------------
1. Y on X;                                                    
2. Y on population linear discriminant / propensity variable LD; 
3. Y on sample linear discriminant / propensity variable LD*;     
4. Y on population linear predictor LP.                          
################################################################

##  set parameters

p <- 2
delta <- 0.5
phi <- 1
n <- 20

alpha <- matrix(c(1,0), nrow=1)
sigma <- diag(1, nrow=p)
b <- matrix(c(0,1), nrow=p)


##  create a function to compute ACE from four linear regressions
 
ps <- function(r) {

  #  data for T, X and Y from the specified linear normal model 

  set.seed(r)
  .Random.seed
  t <- rbinom(n, 1, 0.5)

  require(MASS)
  m <- rep(0, p)
  ex <- mvrnorm(n, mu=m, Sigma=sigma)
  x <- t%*%alpha + ex
 
  ey <- rnorm(n, mean=0, sd=sqrt(phi))
  y <- t*delta + x%*%b + ey

  #  calculate the true and sample linear discriminants
 
  ld.true <- x%*%solve(sigma)%*%t(alpha)
  pred <- x%*%b

  d1 <- data.frame(x, t)
  c <- coef(lda(t~.,d1))
  ld <- x%*%c

  #  extract estimated average causal effect (ACE) 
  #  from the four linear regressions

  dhat.pred <- coef(summary(lm(y~1+t+pred)))[2] 
  dhat.x <- coef(summary(lm(y~t+x)))[2]
  dhat.ld <- coef(summary(lm(y~t+ld)))[2]
  dhat.ld.true <- coef(summary(lm(y~t+ld.true)))[2]

  return(c(dhat.x, dhat.ld, dhat.ld.true, dhat.pred)) 
}


##  estimate ACE from 200 simulated datasets
##  compute mean, standard deviation and mean square error of ACE
 
g <- rep(0, 4)
for (r in 31:230) {
  g <- rbind(g, ps(r))
}
g <- g[-1,]

d.mean <- 0
d.sd <- 0
mse <- 0

for (i in 1:4) {
  d.mean[i] <- round(mean(g[,i]),4)
  d.sd[i] <- round(sd(g[,i]),4)
  mse[i] <- round((d.sd[i])^2+(d.mean[i]-delta)^2, 4)
}


##  generate Figure 5

par(mfcol=c(2,2), oma=c(1.5,0,1.5,0), las=1)
main=c("M0:  Y on (T, X=(X1, X2)')", "M3:  Y on (T, LD*)", 
    "M1: Y on (T, LD=X1)", "M2:  Y on (T, LP=X2)")

for (i in 1:4){
  hist(g[,i], br=seq(-2.5, 2.5, 0.5), xlim=c(-2.5, 2.5), ylim=c(0,80), 
       main=main[i], col.lab="blue", xlab="", ylab="",col="magenta")
  legend(-2.5,85, c(paste("mean = ",d.mean[i]), paste("sd = ",d.sd[i]), 
       paste("mse = ",mse[i])), cex=0.85, bty="n")
}
mtext(side=3, cex=1.2, line=-1.1, outer=T, col="blue", 
    text="Linear regression (homoscedasticity) [200 datasets]")

dev.copy(postscript,"lrpvpdecmbook.ps", horiz=TRUE, paper="a4")
dev.off()
 
            
###########################################################################
Linear regression and subclassification (heteroscedasticity)              
---------------------------------------------------------------------------

Figure 6:
1. Regression on population linear predictor LP;                                                    
2. Regression on population linear discriminant LD; 
3. Regression on population quadratic discriminant / propensity variable QD;     
4. Subclassification on QD.  

Figure 7: 
1. Regression on sample linear predictor LP*;                                                    
2. Regression on sample linear discriminant LD*; 
3. Regression on sample quadratic discriminant / propensity variable QD*;     
4. Subclassification on QD*.                                            
###########################################################################           
 
##  set parameters
          
p <- 20
d <- 0
delta <- 0.5
phi <- 1
n <- 500

a <- matrix(rep(0,p), nrow=1)
alpha <- matrix(c(0.5,rep(0,p-1)), nrow=1)
sigma1 <- diag(1, nrow=p)
sigma0 <- diag(c(rep(0.8, 10), rep(1.3, 10)), nrow=p)
b <- matrix(c(0, 1, rep(0,p-2)), nrow=p)


##  create a function to compute ACE from eight approaches

ps <- function(r) {

  #  data for T, X and Y from the specified linear normal model 

  set.seed(r)
  .Random.seed
  pi <- 0.5
  t <- rbinom(n, 1, pi)
  n0 <- 0
  
  for (i in 1:n) {
    if (t[i]==0)
    n0 <- n0+1
  }
  
  t <- sort(t, decreasing=FALSE)
  mu1 <- a+alpha
  mu0 <- a

  require(MASS)
  m <- rep(0, p)
  ex0 <- mvrnorm(n0, mu=m, Sigma=sigma0)
  ex1 <- mvrnorm((n-n0), mu=m, Sigma=sigma1)

  a <- matrix(rep(a, n), nrow=n, byrow=TRUE)
  x0 <- a[(1:n0),] + t[1:n0]%*%alpha + ex0
  x1 <- a[(n0+1):n,] + t[(n0+1):n]%*%alpha + ex1
  x <- rbind(x0, x1)

  ey <- rnorm(n, mean=0, sd=sqrt(phi))
  d <- rep(d, n)
  y <- d + t*delta + x%*%b + ey

  #  calculate linear discrimant, quadratic discrimant, for population
  #  and for sample, extract estimated ACE from linear regressions 

  ld <- x%*%solve(pi*sigma1+pi*sigma0)%*%t(alpha)
  d1 <- data.frame(x, t)
  c <- coef(lda(t~.,d1))
  ld.s <- x%*%c

  z1 <- x%*%(solve(sigma1)%*%t(mu1) - solve(sigma0)%*%t(mu0))
  z2 <- 0
  for (j in 1:n){
    z2[j] <-  - 1/2*matrix(x[j,], nrow=1)%*%(solve(sigma1)
      - solve(sigma0))%*%t(matrix(x[j,], nrow=1))
  }
  qd <- z1+z2

  dhat.x2 <- coef(summary(lm(y~1+t+x[,2])))[2] 
  dhat.ld <- coef(summary(lm(y~1+t+ld)))[2]
  dhat.qd <- coef(summary(lm(y~1+t+qd)))[2]

  mn <- aggregate(d1, list(t=t), FUN=mean)
  m0 <- as.matrix(mn[1, 2:(p+1)])
  m1 <- as.matrix(mn[2, 2:(p+1)])
  v0 <- var(x0)
  v1 <- var(x1)

  c1 <- solve(v1)%*%t(m1)-solve(v0)%*%t(m0)
  z1.s <- x%*%c1
  c2 <- solve(v1)-solve(v0)
  z2.s <- 0
  for (i in 1:n){
    z2.s[i] <- -1/2*matrix(x[i,], nrow=1)%*%c2%*%t(matrix(x[i,], nrow=1))
  }
  qd.s <- z1.s+z2.s

  dhat.x <- coef(summary(lm(y~1+t+x)))[2]
  dhat.ld.s <- coef(summary(lm(y~1+t+ld.s)))[2]
  dhat.qd.s <- coef(summary(lm(y~1+t+qd.s)))[2]

  #  extract estimated ACE from subclassification 

  d2 <- data.frame(cbind(qd, qd.s, y, t))

  tm1 <- vector("list", 2)
  tm0 <- vector("list", 2)
  te.qd <- 0

  for (k in 1:2) {
    d3 <- d2[, c(k,3,4)]
    d3 <- split(d3[order(d3[,1]), ], rep(1:5, each=100))
   
    tm <- vector("list", 5)
    for (j in 1:5) { 
      tm[[j]] <-   aggregate(d3[[j]], list(Stratum=d3[[j]]$t), FUN=mean)
      tm1[[k]][j] <- tm[[j]][2,3]
      tm0[[k]][j] <- tm[[j]][1,3]
    }
    te.qd[k] <- sum(tm1[[k]] - tm0[[k]])/5   
  }

#  return estimated ACE from the eight approaches

  return(c(dhat.x2, te.qd[1], dhat.ld, dhat.qd, 
    dhat.x, te.qd[2], dhat.ld.s, dhat.qd.s))
}


##  estimate ACE from 200 simulated datasets
##  compute mean, standard deviation and mean square error of ACE

g <- rep(0, 8)
for (r in 31:230) {
   g <- rbind(g, ps(r))
}
g <- g[-1,]

d.mean <- 0
d.sd <- 0
d.mse <- 0

for (i in 1:8) {
  d.mean[i] <- round(mean(g[,i]),4)
  d.sd[i] <- round(sd(g[,i]),4)
  d.mse[i] <- round((d.sd[i])^2+(d.mean[i]-delta)^2, 4)
}


##  generate Figure 6

par(mfcol=c(2,2), oma=c(1.5,0,1.5,0), las=1)
main=c("Regression on LP=X2","Subclassification on QD",
    "Regression on  LD=5/9X1","Regression on QD")
for (i in 1:4){
  hist(g[,i], br=seq(-0.1, 1.1, 0.1), xlim=c(-0.1, 1.1), ylim=c(0,80), 
      main=main[i], col.lab="blue", xlab="", , ylab="", col="magenta")
  legend(-0.2,85, c(paste("mean = ",d.mean[i]), paste("sd = ",d.sd[i]), 
      paste("mse = ",d.mse[i])), cex=0.85, bty="n")
}
mtext(side=3, cex=1.2, line=-1.1, outer=T, col="blue", 
    text="Linear regression and subclassification 
    (heteroscedasticity) [200 datasets]")

dev.copy(postscript,"pslrsubtruebook.ps", horiz=TRUE, paper="a4")
dev.off()


##  generate Figure 7
main=c("Regression on X","Subclassification on QD*",
    "Regression on LD*",  "Regression on QD*")
for (i in 1:4){
  hist(g[,i+4], br=seq(-0.1, 1.1, 0.1), xlim=c(-0.1,1.1), ylim=c(0,80), 
      main=main[i], col.lab="blue", xlab="", ylab="", col="magenta")
  legend(-0.2,85, c(paste("mean = ",d.mean[i+4]), paste("sd = ",d.sd[i+4]), 
      paste("mse = ",d.mse[i+4])), cex=0.85, bty="n")
}
mtext(side=3, cex=1.2, line=-1.1, outer=T, col="blue", 
    text="Linear regression and subclassification 
    (heteroscedasticity, sample) [200 datasets]")

dev.copy(postscript,"pslrsubbook.ps", horiz=TRUE, paper="a4")
dev.off()
           


######################################################################
Figure 9 and Table 1: Propensity analysis of custodial sanctions study              
----------------------------------------------------------------------
1. Y on X;                                                    
2. Y on population linear discriminant / propensity variable LD; 
3. Y on sample linear discriminant / propensity variable LD*;     
4. Y on population linear predictor LP.                          
######################################################################

##  read data, imputation by bootstrapping for missing data

dAll = read.csv(file="pre_impute_data.csv", as.is=T, sep=',', header=T)

set.seed(100)
.Random.seed
library(mi)
data.imp <- random.imp(dAll)


## estimate propensity score by logistic regression

glm.ps<-glm(Sentenced_to_prison~
                Age_at_1st_yuvenile_incarceration_y +
                N_prior_adult_convictions +
                Type_of_defense_counsel +
                Guilty_plea_with_negotiated_disposition +
                N_jail_sentences_gr_90days +
                N_juvenile_incarcerations +
                Monthly_income_level +
                Total_counts_convicted_for_current_sentence +
                Conviction_offense_type +
                Recent_release_from_incarceration_m +
                N_prior_adult_StateFederal_prison_terms +
                Offender_race +
                Offender_released_during_proceed +
                Separated_or_divorced_at_time_of_sentence +
                Living_situation_at_time_of_offence +
                Status_at_time_of_offense +
                Any_victims_female, 
                data = data.imp, family=binomial)
                        
summary(glm.ps)             
eps <- predict(glm.ps, data = data.imp[, -1], type='response')
d.eps <- data.frame(data.imp, Est.ps = eps)


## Figure 9: densities of estimated propensity score (prison vs. probation) 

library(ggplot2)

d.plot <- data.frame(Prison = as.factor(data.imp$Sentenced_to_prison), 
    Est.ps = eps)
pdf("ps.dens.book.pdf")
ggplot(d.plot, aes(x=Est.ps, fill=Prison)) + geom_density(alpha=0.25) +
    scale_x_continuous(name="Estimated propensity score") +
    scale_y_continuous(name="Density")
dev.off()


## logistic regression of the outcome on all 17 variables

glm.y.allx<-glm(Recidivism~ 
                Sentenced_to_prison +                                                                 
                Age_at_1st_yuvenile_incarceration_y +                  
                N_prior_adult_convictions +
                Type_of_defense_counsel +
                Guilty_plea_with_negotiated_disposition +
                N_jail_sentences_gr_90days +
                N_juvenile_incarcerations +
                Monthly_income_level +
                Total_counts_convicted_for_current_sentence +
                Conviction_offense_type +
                Recent_release_from_incarceration_m +
                N_prior_adult_StateFederal_prison_terms +
                Offender_race +
                Offender_released_during_proceed +
                Separated_or_divorced_at_time_of_sentence +
                Living_situation_at_time_of_offence +
                Status_at_time_of_offense +
                Any_victims_female, 
                data = d.eps, family=binomial)

summary(glm.y.allx)             


## logistic regression of the outcome on the estimated propensity score

glm.y.eps<-glm(Recidivism ~ Sentenced_to_prison + Est.ps, 
  data = d.eps, family=binomial)
summary(glm.y.eps)


\end{verbatim}

%
%
%


\end{document}